\newcommand{\Mod}{{\rm Mod}}
\newcommand{\R}{{\mathbb R}}
\newcommand{\N}{{\mathbb N}}
\newcommand{\spn}{N^{1,p}( X ) }
\newcommand{\ga}{\gamma}
\newcommand{\Ga}{\Gamma}
\newcommand{\cH}{\mathcal{H}}
\newcommand{\cF}{\mathcal{F}}
\newcommand{\defeq}{:=}
\newcommand{\Adm}{\rm Adm}
\DeclareMathOperator{\diam}{diam}
\DeclareMathOperator{\LIP}{LIP}
\def\vint_#1{\mathchoice
          {\mathop{\vrule width 6pt height 3 pt depth -2.5pt
                  \kern -8pt \intop}\nolimits_{#1}}%
          {\mathop{\vrule width 5pt height 3 pt depth -2.6pt
                  \kern -6pt \intop}\nolimits_{#1}}%
          {\mathop{\vrule width 5pt height 3 pt depth -2.6pt
                  \kern -6pt \intop}\nolimits_{#1}}%
          {\mathop{\vrule width 5pt height 3 pt depth -2.6pt
                  \kern -6pt \intop}\nolimits_{#1}}}
\theoremstyle{plain}
\newtheorem{theorem}{Theorem}[section]
\newtheorem{corollary}[theorem]{Corollary}
\newtheorem{lemma}[theorem]{Lemma}
\theoremstyle{definition}
\newtheorem{definition}[theorem]{Definition}
\newtheorem{remark}[theorem]{Remark}
\title{On The Sharp Lower Bound for Duality of Modulus}
\author{Sylvester Eriksson-Bique}
\address{Research Unit of Mathematical Sciences,
P.O.Box 3000,
FI-90014 Oulu, Finland}
\email{\tt sylvester.eriksson-bique@oulu.fi}
\author{Pietro Poggi-Corradini}
\address{Kansas State University,
Department of Mathematics,
138 Cardwell Hall,
Manhattan, KS 66506}
\email{pietro@math.ksu.edu}
\subjclass[2010]{Primary 30L15, Secondary 30L10, 28A75, 49N15.}
\begin{document}
\maketitle

\begin{abstract} 
We establish a sharp reciprocity inequality for modulus in compact metric spaces $X$ with finite Hausdorff measure.  In particular, when $X$ is also homeomorphic to a planar rectangle, our result answers a question of K.~Rajala and M.~Romney. More specifically, we obtain a sharp inequality 
between the modulus of the family of curves connecting two disjoint continua $E$ and $F$ in $X$ and the modulus of the family of surfaces of finite Hausdorff measure that separate $E$ and $F$.
The paper also develops approximation techniques, which may be of independent interest.
\end{abstract}

\section{Introduction}
Modulus, which we define below in Section \ref{sec:preliminaries}, is a way of measuring the richness of a collection of curves, or more generally a collection of surfaces or even measures, and arises as the value of a convex minimization problem.
Ahlfors and Beurling showed in \cite{ahlfbehr} that for a topological rectangle $Q$ in the plane $\R^2$, if $\Gamma_1(Q)$ is the family of curves connecting a pair of opposite sides, and $\Gamma_2(Q)$ is the family of curves connecting the other pair of opposite sides, then their $2$-modulus values are related by the following reciprocal formula:
\begin{equation}\label{eq:explanation}
\Mod_2 \Gamma_1(Q)\cdot \Mod_2 \Gamma_2(Q)=1.
\end{equation}
 Later, see \cite{aikawohtsu,ziemercapacity}, this relation was generalized  to families of curves and separating surfaces in Euclidean spaces of higher dimension. More recently, see \cite{lohvansuu,joneslahti}, this was extended to more general metric spaces in slightly different forms, with some assumptions of the underlying space, such as doubling and the presence of a  Poincar\'e inequality. In the discrete setting, such inequalities were explored on graphs in \cite{pietroduality}.

Remarkably, Kai Rajala showed in \cite{rajalaunif}, that if the equality in Equation \eqref{eq:explanation} is replaced by a comparability of the form 
\begin{equation}\label{eq:quasi-reprocity}
\frac{1}{\kappa}\leq (\Mod_2 \Gamma_1(Q))^{1/2}(\Mod_2 \Gamma_2(Q))^{1/2} \leq \kappa,
\end{equation}
 for some $\kappa$ together with another technical assumption, then this provides a characterization of metric surfaces $X$ (with locally finite Hausdorff 2-measure) which are \emph{quasiconformal} to the plane. Rajala's result extends a long line of work intent on constructing quasiconformal uniformizations in the spirit of the classic uniformization of Riemann surfaces via conformal maps, see e.g. \cite{bonkkleiner, rajalaunif} for more discussion. Recently, it was observed by Rajala and Romney \cite{rajalaromney} that the lower bound in (\ref{eq:quasi-reprocity}) holds automatically whenever the underlying space is homeomorphic to $\R^2$ and has locally finite Hausdorff $2$-measure. They were able to establish this fact  with the specific constant $\kappa = 8000/\pi$ and conjectured in the same paper that the optimal constant should be $\kappa = 4/\pi$.
In particular, they give an example showing $\kappa$ cannot be smaller than $4/\pi$ (see the discussion on sharpness below). The purpose of our paper is to establish this conjecture and show that it holds also in a slightly more general context. 

To set the notation, suppose that $X$ is a  compact metric space with  finite $
\cH^N$-Hausdorff measure, for some $N\in\R$, with $N\ge 1$.  Suppose $E,F$ are two disjoint \emph{continua}, i.e., nonempty, compact connected sets, in  $X$. Let $\Gamma(E,F)$ be the family of curves connecting $E$ and $F$ in $X$. Also, let $\Sigma(E,F)$ be the family of topological boundaries $\partial U$ of open sets $U$ in $X$, such that $E \subset U$ and $F \subset {\rm int}(U^c)$. We think of $\partial U$ as a surface separating $E$ and $F$. Consider the corresponding family of measures $\Sigma_H(E,F)$ consisting of all Hausdorff measures of the form $\mathcal{H}^{N-1}|_{\partial U}$ that are finite. If we want to consider these notions relative to a subset $Q\subset X$, we write $\Gamma(E,F;Q),\Sigma_H(E,F;Q)$. Throughout the paper, $p \in (1,\infty)$ and $q$ is its dual exponent, namely $p^{-1}+q^{-1}=1$. For our next result, $X$ need not be homeomorphic to $\R^N$ or any of its subsets. However, later we will specialize to such a setting.

\begin{theorem}\label{thm:lowerbound}
Suppose that $X$ is a compact metric space, with finite $
\cH^N$-Hausdorff measure, for some real number $N\ge 1$. Set  $p,q \in (1,\infty)$ with $p^{-1}+q^{-1}=1$.
Let $E,F$ be two disjoint continua in $X$, with $\Ga(E,F)$ and $\Sigma_H(E,F)$ defined as above. Then, if $\Mod_p(\Gamma(E,F))>0$, the following inequality holds
\begin{equation}\label{eq:weakduality}
(\Mod_p \Gamma(E,F))^{\frac{1}{p}} (\Mod_q \Sigma_H(E,F))^{\frac{1}{q}} \geq \frac{v_N}{2v_{N-1}},
\end{equation}
where $v_k:=\frac{\pi^{k/2}}{\Ga\left(\frac{k}{2}+1\right)}$, for $k\ge 1$, and $\Ga\left(\frac{k}{2}+1\right)=\int_0^\infty x^{k/2}e^{-x}dx$ is the usual Gamma function.

Moreover, if $\Mod_p \Gamma(E,F) = 0$, then $\Mod_q \Sigma_H(E,F)=\infty$. 
\end{theorem}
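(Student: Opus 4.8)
The strategy is the standard one for duality of modulus, executed with a sharp constant: from an admissible density $\rho$ for $\Gamma(E,F)$ I would manufacture a whole pencil of separating surfaces, one for almost every level $t\in(0,1)$, and then pay for all of them at once with $\rho$ via the coarea inequality. Fix $\rho$ admissible for $\Gamma(E,F)$ with $\int_X\rho^p\,d\mathcal{H}^N$ as close as we wish to $\Mod_p\Gamma(E,F)$ (finite‑energy admissible densities exist, e.g. $\rho\equiv\dist(E,F)^{-1}$, since $\dist(E,F)>0$ and $\mathcal{H}^N(X)<\infty$), and fix $\sigma$ admissible for $\Sigma_H(E,F)$ with $\int_X\sigma^q\,d\mathcal{H}^N$ close to $\Mod_q\Sigma_H(E,F)$; if $\Mod_q\Sigma_H(E,F)=\infty$ there is nothing to prove once $\Mod_p\Gamma(E,F)>0$. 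Set $u(x):=\min\bigl\{1,\ \inf_\gamma\int_\gamma\rho\,ds\bigr\}$, the infimum over rectifiable curves joining $E$ to $x$. Admissibility of $\rho$ gives $u\equiv 0$ on $E$ and $u\equiv 1$ on $F$, and $\rho$ is an upper gradient of $u$. For $t\in(0,1)$ the set $U_t:=\{u<t\}$ is open, contains $E$, and satisfies $F\subset{\rm int}(U_t^{c})$, so $\partial U_t\in\Sigma(E,F)$; moreover $\partial U_t\subset u^{-1}(t)$.

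The heart of the matter is the coarea (Eilenberg) inequality in sharp form: for a real‑valued function $u$ and every Borel $g\ge 0$,
\[
\int_\R^{*}\!\int_{u^{-1}(t)}g\,d\mathcal{H}^{N-1}\,dt\ \le\ \frac{2v_{N-1}}{v_N}\int_X g\,L_u\,d\mathcal{H}^N,
\]
where $L_u$ is the pointwise upper Lipschitz constant of $u$; and one uses $L_u\le\rho$ $\mathcal{H}^N$‑a.e. Applying this with $g=\sigma$, combining with $\partial U_t\subset u^{-1}(t)$ and admissibility of $\sigma$ on the surfaces $\partial U_t$ (which carry finite $\mathcal{H}^{N-1}$‑measure for a.e.\ $t$, again by coarea, so that $\mathcal{H}^{N-1}|_{\partial U_t}\in\Sigma_H(E,F)$), together with Fubini and Hölder, yields
\[
1=\int_0^1 dt\ \le\ \int_0^1\!\int_{\partial U_t}\sigma\,d\mathcal{H}^{N-1}\,dt\ \le\ \frac{2v_{N-1}}{v_N}\int_X\sigma\,\rho\,d\mathcal{H}^N\ \le\ \frac{2v_{N-1}}{v_N}\Bigl(\int_X\rho^p\Bigr)^{1/p}\Bigl(\int_X\sigma^q\Bigr)^{1/q}.
\]
Rearranging and letting $\rho,\sigma$ run to the optimal densities gives $(\Mod_p\Gamma(E,F))^{1/p}(\Mod_q\Sigma_H(E,F))^{1/q}\ge v_N/(2v_{N-1})$.

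The degenerate case is the same computation read differently. If $\Mod_p\Gamma(E,F)=0$, choose admissible $\rho_\e$ with $\int_X\rho_\e^p\to 0$. The coarea bound $\int_0^1\mathcal{H}^{N-1}(u_\e^{-1}(t))\,dt\le\frac{2v_{N-1}}{v_N}\bigl(\int_X\rho_\e^p\bigr)^{1/p}\mathcal{H}^N(X)^{1/q}<\infty$ produces finite‑measure separating surfaces, so $\Sigma_H(E,F)\neq\varnothing$ and its modulus is not vacuously zero; and for any finite admissible $\sigma$ the displayed chain forces $\int_X\sigma^q\ge(v_N/2v_{N-1})^q\bigl(\int_X\rho_\e^p\bigr)^{-q/p}\to\infty$, so no finite admissible $\sigma$ exists and $\Mod_q\Sigma_H(E,F)=\infty$.

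The main obstacle is the regularity of $u$ and the validity of the sharp coarea inequality for a space as rough as a general compact metric space of finite $\mathcal{H}^N$‑measure. In this generality $u$ need not be continuous (so $U_t$ need not be open and $\partial U_t\subset u^{-1}(t)$ is not automatic), the level sets $u^{-1}(t)$ need not be $(N-1)$‑rectifiable or of locally finite $\mathcal{H}^{N-1}$‑measure, and — most seriously — the pointwise Lipschitz constant $L_u$ can vanish on large sets where $\rho$ does not, so the naive inequality $L_u\le\rho$ fails. Overcoming this is exactly where the approximation techniques advertised in the abstract enter: one replaces the data $(X,\rho)$ (respectively $(X,\sigma)$) by better‑behaved data — by regularizing $\rho$ to be continuous and bounded, by passing to an auxiliary length metric on the rectifiably connected part of $X$, and/or by discretizing $X$ along fine nets in the spirit of the graph setting of \cite{pietroduality} — proving the inequality there with the \emph{same} constant $2v_{N-1}/v_N$ so that no sharpness is lost, and then passing to the limit. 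Controlling the Hausdorff measures under these approximations and checking that admissibility for $\Gamma(E,F)$ and for $\Sigma_H(E,F)$ is asymptotically preserved is the technical core; once $u$ is sufficiently regular, the coarea inequality itself is classical and already sharp.
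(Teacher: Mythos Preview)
Your outline is correct and matches the paper's approach: build $u$ from an admissible density for $\Gamma(E,F)$, apply the sharp Eilenberg coarea bound to its sublevel boundaries against an admissible $\sigma$ for $\Sigma_H(E,F)$, then H\"older; and you correctly isolate the obstacle (the raw $u$ is not Lipschitz, so Eilenberg's inequality cannot be invoked directly) and its cure (approximation that preserves the constant $2v_{N-1}/v_N$). The paper's specific approximation is very close to your ``discretize along fine nets'' suggestion: it regularizes the upper gradient to a lower semicontinuous $g\ge\epsilon$, approximates $g$ from below by continuous $\tilde g_i$, and defines Lipschitz $u_i(x)=\min\bigl\{1,\inf\sum_k\tilde g_i(p_k)\,d(p_k,p_{k+1})\bigr\}$ over $\tfrac1i$-fine chains from $E$ to $x$, so that $\tilde g_i$ is a \emph{local Lipschitz upper gradient} of $u_i$ (exactly what the Eilenberg bound needs); an Arzel\`a--Ascoli argument in $\ell_\infty(\mathbb N)\supset X$ shows $\inf_F u_i\to 1$, and the sharp constant survives the limit.
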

For the definition of modulus we refer to Definition \eqref{eq:modulusdef}. In particular, in the setting of Theorem \ref{thm:lowerbound}, we always have $\Mod_p \Gamma(E,F)<\infty$, because the constant function $\rho(x) \equiv d(E,F)^{-1}$ is admissible. Note also that it is possible for $\Sigma_H$ to contains null-measures, in which case the modulus $\Mod_q \Sigma_H(E,F)$ becomes infinite.

As a corollary, when we restrict to planar metric spaces, we obtain the inequality conjectured by Rajala and Romney in \cite{rajalaromney}, which we extend also to the case $p\ne 2$. 
\begin{corollary}\label{cor:plane} Suppose that $Y$ is a metric space homeomorphic to $\R^2$, which has locally finite $\cH^2$-measure. Set  $p,q \in (1,\infty)$ and $p^{-1}+q^{-1}=1$. 
Assume $Q\subset Y$ is homeomorphic to $[0,1]^2$, and hence can be thought as a quadrilateral. Let the sides of $Q$ correspond to continua $A,B,C,D$ (in cyclic order). Set $\Gamma_1(Q):=\Gamma(A,C;Q)$,  the family of curves connecting $A$ and $C$ in $Q$, and $\Gamma_2(Q):=\Gamma(B,D;Q)$,  the family of curves connecting $B$ and $D$ in $Q$. Then, the following inequality holds: 
\begin{equation}\label{eq:weakduality-quadrilateral}
(\Mod_p \Gamma_1(Q))^{\frac{1}{p}} (\Mod_q\Gamma_2(Q))^{\frac{1}{q}}  \geq \frac{\pi}{4}.
\end{equation}
\end{corollary}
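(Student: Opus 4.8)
The plan is to derive the corollary from Theorem~\ref{thm:lowerbound}, applied to the compact metric space $X=Q$ (with the metric restricted from $Y$) with $E=A$, $F=C$ and $N=2$, followed by a topological comparison that replaces the separating-surface family by $\Gamma_2(Q)$. First I would verify the hypotheses and unwind the constant: since $Q$ is compact and $Y$ has locally finite $\mathcal H^2$-measure, finitely many open sets of finite $\mathcal H^2$-measure cover $Q$, so $\mathcal H^2(Q)<\infty$; and $A$, $C$, being opposite sides, are disjoint continua in $Q$. For $N=2$ one has $v_2=\pi$ and $v_1=2$, hence $v_N/(2v_{N-1})=\pi/4$. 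Thus, provided $\Mod_p\Gamma_1(Q)=\Mod_p\Gamma(A,C;Q)>0$, Theorem~\ref{thm:lowerbound} gives
\[
(\Mod_p\Gamma_1(Q))^{1/p}\,(\Mod_q\Sigma_H(A,C;Q))^{1/q}\ \ge\ \frac{\pi}{4}.
\]

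The main step is then the comparison
\[
\Mod_q\Sigma_H(A,C;Q)\ \le\ \Mod_q\Gamma_2(Q),
\]
after which \eqref{eq:weakduality-quadrilateral} follows at once since $t\mapsto t^{1/q}$ is increasing. To prove it I would show that every $\rho$ admissible for $\Gamma_2(Q)=\Gamma(B,D;Q)$ is also admissible for $\Sigma_H(A,C;Q)$; since the competitor set for $\Sigma_H(A,C;Q)$ then contains that for $\Gamma_2(Q)$, the infima compare in the stated direction. So fix such a $\rho$ and a finite-measure separating surface $\partial U\in\Sigma(A,C;Q)$, where $U$ is open in $Q$ with $A\subset U$ and $C\subset{\rm int}(U^c)$. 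Then $Q\setminus\partial U=U\cup{\rm int}(U^c)$ is a disjoint union of open sets containing $A$ and $C$ respectively, so $\partial U$ is a closed set separating the opposite sides $A$ and $C$ of the topological square $Q$. Transporting through a homeomorphism $Q\cong[0,1]^2$ carrying $A,B,C,D$ to the four sides, the classical separation lemma for a square produces a continuum $K\subset\partial U$ with $K\cap B\neq\emptyset$ and $K\cap D\neq\emptyset$. Since $K\subset\partial U$ we have $\mathcal H^1(K)<\infty$, so by the structure theorem for continua of finite $\mathcal H^1$-measure $K$ is arcwise connected and contains an injective rectifiable arc $\gamma$ joining a point of $B$ to a point of $D$, i.e. $\gamma\in\Gamma_2(Q)$. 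As $\gamma$ is injective, arclength measure along $\gamma$ is exactly $\mathcal H^1|_{\gamma}$, so
\[
\int_{\partial U}\rho\,d\mathcal H^1\ \ge\ \int_{\gamma}\rho\,d\mathcal H^1\ =\ \int_\gamma\rho\,ds\ \ge\ 1,
\]
the last step by admissibility of $\rho$ for $\Gamma_2(Q)$. Hence $\rho$ is admissible for $\Sigma_H(A,C;Q)$, which proves the comparison; together with the displayed consequence of Theorem~\ref{thm:lowerbound} this gives \eqref{eq:weakduality-quadrilateral} whenever $\Mod_p\Gamma_1(Q)>0$. In the remaining case $\Mod_p\Gamma_1(Q)=0$, the last clause of Theorem~\ref{thm:lowerbound} gives $\Mod_q\Sigma_H(A,C;Q)=\infty$, hence $\Mod_q\Gamma_2(Q)=\infty$ by the comparison, and \eqref{eq:weakduality-quadrilateral} holds with the convention $0\cdot\infty=\infty$.

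I expect the topological step to be the main obstacle: it requires invoking precisely (i) the classical fact that a closed set separating one pair of opposite sides of a topological square must contain a continuum meeting the other pair of sides, pulled back along the homeomorphism $Q\cong[0,1]^2$, and (ii) the classical structure theorem that a continuum of finite $\mathcal H^1$-measure is arcwise connected, with every arc inside it rectifiable of length equal to its $\mathcal H^1$-measure. The finiteness $\mathcal H^1(\partial U)<\infty$ — which is exactly the defining condition for membership in $\Sigma_H(A,C;Q)$ — is what makes (ii) applicable, and is the reason the comparison is with the finite-measure separating surfaces and not with the whole family $\Sigma(A,C;Q)$.
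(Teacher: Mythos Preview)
Your proof is correct and follows essentially the same strategy as the paper: apply Theorem~\ref{thm:lowerbound} to $X=Q$ with $N=2$, then establish $\Mod_q\Sigma_H(A,C;Q)\le\Mod_q\Gamma_2(Q)$ by extracting from each separating boundary $\partial U$ a simple rectifiable arc in $\Gamma_2(Q)$ and noting that for simple curves arclength equals $\cH^1$. The paper packages the topological extraction into Lemma~\ref{lem:contained} (finding a minimal separating continuum via Zorn's lemma and Janiszewski's theorem, then parametrizing it) whereas you invoke the square separation lemma directly; the arguments are equivalent, and your explicit treatment of the case $\Mod_p\Gamma_1(Q)=0$ is a detail the paper leaves implicit.
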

To prove Corollary \ref{cor:plane}, we will apply Theorem \ref{thm:lowerbound} with $Q=X$, since $X$ will be compact and have finite Hausdorff $\cH^2$-measure.  The corollary is sharp, as can be seen from the following example already mentioned in \cite[Example 2.2]{rajalaunif}. 
 
\noindent \textbf{Sharpness:} Take $Y=\R^2$ equipped with the $\ell^\infty$-distance $d((x_1,x_2),(y_1,y_2))=\max_{i=1,2}|x_i-y_i|$. Let $Q\defeq [0,1]^2$. We want to compute the modulus of the families of curves connecting the horizontal and vertical pairs of sides. 

Consider the Hausdorff measure $\cH^2$ with respect to the metric $d$. The usual scaling factor $v_N 2^{-N}$ in Equation \eqref{eq:hmeas-def}, is equal to $\pi/4$ when $N=2$. Since $\cH^2$ is a translation invariant and locally finite measure, there must be some constant $c$ so that $\cH^2(A)=c\lambda(A)$ for each Borel set $A$, where $\lambda$ is the usual Euclidean area measure. 
This constant can be determined by computing $\cH^2(Q)$. On one hand, we obtain $\cH^2(Q)\leq \pi/4$ by considering coverings of $Q$ by a grid of squares of side length $n^{-1}$ and sending $n\to\infty$. On the other hand, consider any countable cover $A_i$ of $Q$ with $Q \subset \bigcup_i A_i$. Each $A_i$ can be replaced by its bounding box $\tilde{A_i}$, which has the same diameter in the $\ell^\infty$-metric. Denote the diameter of a set $E$ with respect to $d$ by $\diam_d(E)$. Then, $\sum_i \diam_d(A_i)^2=\sum_i \diam_d(\tilde{A_i})^2\geq \sum_i \lambda(\tilde{A_i}) \geq 1$. Hence, accounting for the scaling factor in Equation \eqref{eq:hmeas-def}, together with the upper bound established before, we get that $\cH^2(Q)=\pi/4=c$. 
We remark, that the same could have also been established by the deep result of Kirchheim \cite[Lemma 6]{kircheim}.

Suppose now that $\rho$ is admissible for the curves connecting the left to the right hand side. By admissibility for horizontal curves, we get
$$\int_{0}^1 \!\!\int_0^1 \rho ~d\lambda \geq 1.$$
Thus, $\int_Q \rho ~d\cH^2\geq \frac{\pi}{4}$. An application of H\"older's inequality gives 
\[
\frac{\pi}{4}\le \left(\int_Q \rho^p d\cH^2\right)^{1/p}\left(\int_Q  d\cH^2\right)^{1/q}=\left(\int_Q \rho^p d\cH^2\right)^{1/p}\left(\frac{\pi}{4}\right)^{1/q}
\]
and thus 
\[
\int_Q \rho^p d\cH^2 \ge \frac{\pi}{4}\qquad\forall p\in (1,\infty).
\]
Minimizing over $\rho$ admissible, we get that $\Mod_p(\Gamma_1(Q)) \geq \frac{\pi}{4}$, for all $p\in(1,\infty)$. 
Conversely, the constant function $\rho\equiv 1$ is also admissible for $\Ga_1(Q)$, thus $\Mod_p(\Ga_1(Q))\le \int_Q d\cH^2=\frac{\pi}{4}$. Hence, $\Mod_p(\Gamma_1(Q)) = \frac{\pi}{4}$, for all $p\in(1,\infty)$, and,
by symmetry, $\Mod_q(\Gamma_2(Q))= \frac{\pi}{4}$ as well. Thus, in this example Equation (\ref{eq:weakduality-quadrilateral}) becomes
\[
(\Mod_p \Gamma_1(Q))^{\frac{1}{p}} (\Mod_q\Gamma_2(Q))^{\frac{1}{q}}  = \left(\frac{\pi}{4}\right)^{\frac{1}{p}}\left(\frac{\pi}{4}\right)^{\frac{1}{q}}=\frac{\pi}{4}.
\]
The proof of Theorem \ref{thm:lowerbound} rests on a co-area type estimate, analogously to \cite{rajalaromney},  and a Lipschitz approximation. However, to get the sharp constant we need a novel approximation scheme that yields sharper bounds. See Theorem \ref{thm:simplified} for a precise statement. This technique was first introduced in \cite{seb2020}, and, in the context of this paper, it yields the following result, which may be of independent interest. 
\begin{theorem}\label{thm:continuous-u} Let $p\in[1,\infty)$ with $q$ its dual exponent.
Let $X$ be a compact metric space with  finite $\cH^{N}$-measure, for some real number $N\ge 1$.
Let $E,F$ be two disjoint continua in $X$. Suppose that $u$ is a $\spn$-function such that $u=1$ on $F$ and $u=0$ on $E$. Then, there exists a sequence of Lipschitz  functions $u_i \in \spn$, so that $u_i=1$ on $F$ and $u_i=0$ on $E$,  with the property that for any Borel $\rho \in L^q(X)$ 
$$\limsup_{i 
\to 
\infty} \int_{0}^1 \int_{\partial\{u_i< t \}} \rho ~d\cH^{N-1} ~dt \leq \frac{2v_{N-1}}{v_N}\int \rho |\nabla u|_p ~d\cH^N .$$
\end{theorem}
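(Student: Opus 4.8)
\textbf{Proof proposal for Theorem \ref{thm:continuous-u}.}

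The plan is to build the approximating sequence $u_i$ by two successive regularizations of $u$, and then to estimate the coarea-type integral using a Fubini argument together with properties of Hausdorff measure. First I would reduce to the case where $u$ already has good integrability of its upper gradient: since $u \in \spn$ with boundary values $0$ on $E$ and $1$ on $F$, fix a $p$-weak upper gradient $g = |\nabla u|_p$ and work with a Borel representative. Because $X$ is compact with finite $\cH^N$-measure, we may use the McShane-type extension and truncation to assume $0 \le u \le 1$ everywhere without increasing $g$. The key construction is the discrete convolution / partition-of-unity smoothing from \cite{seb2020}: for a fine scale $\e_i \to 0$, choose a maximal $\e_i$-separated net, take the associated Lipschitz partition of unity subordinate to balls of radius comparable to $\e_i$, and define $u_i$ as the weighted average of local averages of $u$; a variant near $E$ and $F$ forces the exact boundary values $u_i = 0$ on $E$, $u_i = 1$ on $F$ while keeping $u_i$ Lipschitz and in $\spn$. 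This is standard except for the sharp constant.

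The heart of the matter is the passage from a volume integral to the coarea integral with the sharp factor $2v_{N-1}/v_N$. Here I would invoke the isodiametric/isoperimetric content of Hausdorff measure in the form used in the sharpness example: for any set $A$ of small diameter, $\cH^{N-1}$ of its "slices" is controlled, up to the dimensional constant $2v_{N-1}/v_N$, by $\diam(A)^{N-1}$, while $\cH^N(A) \approx (v_N/2^N)\diam(A)^N$ on the relevant model pieces. Concretely, since each $u_i$ is Lipschitz, the standard metric-space coarea inequality (Eilenberg inequality) gives
\[
\int_0^1 \cH^{N-1}\big(\partial\{u_i < t\} \cap B\big)\, dt \le C_N \int_B \Lip u_i \, d\cH^N
\]
for Borel $B$, but with a non-sharp $C_N$. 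To sharpen it, I would not apply the Eilenberg inequality as a black box but instead run the covering argument directly on the graph-like structure of $u_i$: cover $X$ by the net balls, on each ball the level sets $\{u_i = t\}$ that meet it have total $\cH^{N-1}$-cost across $t \in [0,1]$ bounded by (oscillation of $u_i$ on the ball) times (diameter of the ball)$^{N-1}$ times the scaling constant in Equation \eqref{eq:hmeas-def}, and then sum. Multiplying by a Borel weight $\rho$ and using that $\rho$ is essentially constant on scale-$\e_i$ balls in an $L^q$-averaged sense, the sum telescopes into $\tfrac{2v_{N-1}}{v_N}\int \rho\, \Lip u_i\, d\cH^N$ up to an error $o(1)$ as $i \to \infty$.

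Finally I would control $\limsup_i \int \rho\, \Lip u_i \, d\cH^N$ by $\int \rho\, g\, d\cH^N$. Since $u_i$ is a local-average smoothing of $u$ at scale $\e_i$, its pointwise Lipschitz constant $\Lip u_i(x)$ is bounded by a local average of $g$ over a ball $B(x, C\e_i)$; then one applies the $L^q$-$L^p$ duality pairing with $\rho$, Fubini to move the averaging onto $\rho$, and the Lebesgue differentiation theorem (valid $\cH^N$-a.e. by compactness and finiteness of the measure, after passing to a subsequence where the maximal-function-type bounds converge) to obtain $\limsup_i \int \rho\, \Lip u_i \, d\cH^N \le \int \rho\, g\, d\cH^N$ with constant $1$. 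The main obstacle is the second paragraph: extracting the \emph{sharp} dimensional constant $2v_{N-1}/v_N$ from the coarea step rather than a dimensional constant that is merely finite, which is exactly why a bespoke covering argument tied to the isodiametric inequality — rather than an off-the-shelf Eilenberg or BV coarea inequality — is needed, and why the net must be chosen so that the balls used to smooth $u$ are also the balls used to estimate the level sets, so that oscillation and diameter bookkeeping line up. A secondary subtlety is ensuring the forced boundary values on $E$ and $F$ do not introduce extra $\cH^{N-1}$-mass on $\partial\{u_i<t\}$ near $E\cup F$; this is handled by noting $E,F$ are continua (hence either $\cH^{N-1}$-negligible in the relevant range of $t$, or absorbed into the $\rho\in L^q$ estimate since the smoothing region shrinks).
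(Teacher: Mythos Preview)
Your approach has a genuine gap at the third step, and a related misconception in the second. The space $X$ is only assumed compact with finite $\cH^N$-measure: there is no doubling hypothesis, no Vitali covering property, and no Poincar\'e inequality. Consequently the Lebesgue differentiation theorem, maximal-function bounds, and the statement ``$\Lip u_i(x)$ is bounded by a local average of $g$ over $B(x,C\e_i)$ with constant $1$'' are all unavailable. Discrete-convolution smoothing controls $\Lip u_i$ by ball-averages of $g$ only up to constants coming from the partition of unity and the overlap of the balls, and passing those averages back to $g$ pointwise with constant $1$ is precisely a differentiation theorem you do not have. So the conclusion $\limsup_i \int \rho\,\Lip u_i\,d\cH^N \le \int \rho\,|\nabla u|_p\,d\cH^N$ is not justified in this generality.

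The second paragraph also misplaces the difficulty: the Eilenberg inequality \eqref{eq:eilenberg} already carries the sharp constant $2v_{N-1}/v_N$; no bespoke covering is needed for that. The actual issue is replacing the global factor $\LIP[u](A)$ on its right-hand side by something that \emph{integrates} to $\int_A g\,d\cH^N$ without losing a multiplicative constant. The paper handles this without any averaging: it first uses Vitali--Carath\'eodory to make $g$ lower semicontinuous, then approximates $g$ from below by continuous $\tilde g_i$, and builds $u_i$ as a discrete length-functional $\cF_i(x)=\inf\sum_k \tilde g_i(p_k)\,d(p_k,p_{k+1})$ over $i^{-1}$-chains from $E$ to $x$. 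This construction makes $\tilde g_i$ a \emph{local Lipschitz upper gradient} of $u_i$ in the sense of Definition~\ref{def:loc-lip-ug}, so that Eilenberg applied on small balls and summed (Lemma~\ref{lem:local-lipshitz}) gives the sharp constant with $\tilde g_i$, not with an average of $g$, on the right. The boundary values come for free ($u_i|_E=0$ by definition) and $\inf_F u_i\to 1$ via an Arzel\`a--Ascoli argument on the discrete chains; the $L^p$-convergence $\tilde g_i\to |\nabla u|_p$ is monotone/dominated, so no differentiation theorem is ever invoked.
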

The space $N^{1,p}(X)$  stands for the Newtonian space introduced in \cite{sha00}, which is an analogue of the classical Sobolev spaces. See also \cite{HKST07} for further background on such spaces. Below we will provide a short definition.

\noindent \textbf{Acknowledgements:} The first author was partially supported by the National Science Foundation under Grant No. DMS-1704215 and by the Finnish Academy under Research postdoctoral Grant No. 330048. The second author thanks the Department of Mathematics at UCLA, where this research started, for its generous support. The authors thank Kai Rajala and Mario Bonk for discussions on the topic and comments.

\section{Preliminaries}\label{sec:preliminaries}
In this paper, $(X,d)$ will denote a compact metric space with finite $\cH^N$-Hausdorff measure, for some real number $N\ge 1$ and $Y$ will denote a space homeomorphic to $\R^2$ with locally finite $\cH^2$-measure. The concepts of modulus, curve families and Newtonian (or Sobolev) spaces are defined in the same way for $X$ and $Y$. Throughout, we will use $N\ge 2$ to denote the Hausdorff dimension of the space.  The spaces will be equipped with the Hausdorff measure $\cH^N$, and it is with respect to these measures that we define the Lebesgue spaces  $L^p(X)$, for $p \in [1,\infty]$, and the notion of almost everywhere. Where needed, the norm on $L^p(X)$ will be denoted by $\|\cdot \|_{L^p}$. For purposes of normalization, we note that the Hausdorff measure is defined as $\cH^N(A) \defeq \lim_{\delta \to 0} \cH^N_\delta(A)$, where 
\begin{equation}\label{eq:hmeas-def}
\cH^N_\delta(A) \defeq \frac{v_N}{2^N}\inf \left\{ \sum_{i} \diam(A_i)^N : A \subset \bigcup_i A_i, \ \diam(A_i) < \delta \right\} 
\end{equation}
where $v_k:=\frac{\pi^{k/2}}{\Ga(\frac{k}{2}+1)}$, and $\Ga(t)$ is the usual Gamma function. Also, $\diam(E)=\sup_{x,y\in E} d(x,y)$ is the diameter of a set $E$.

Curves in $X$ are continuous maps $\gamma: I\to X$, defined on some compact non-empty interval $I\subset \R$. Let $\Gamma(X)$ be the family of all such curves. We are interested in the families $\Gamma(E,F)$ (or $\Gamma(E,F;Q)$) of all the curves which connect $E$ to $F$ (resp. in $Q$ for some  $Q \subset X$). Throughout, $E$ and $F$ will be two disjoint compact connected non-empty subsets of $X$.

Also let $M(X)$ denote the family of all finite Radon measures on $X$.  In particular, $\Sigma_H(E,F)$ will consist of all finite measures $\cH^{N-1}|_{\partial U}$, given an open set $U$ with $E \subset U$ and $F \subset {\rm int}(U^c)$. Here,  ${\rm int}(A)$ denotes the interior of the set $A\subset X$.

Let $\Gamma \subset \Gamma(X)$ be any family of curves. We say a non-negative Borel function $\rho: X \to [0,\infty]$ admissible for $\Ga$, and write $\rho\in\Adm(\Ga)$, if $\int_\gamma \rho \, d s \geq 1$, for each rectifiable $\gamma \in \Gamma(X)$. Here $ds$ represents the arc-length parametrization of $\ga$, see \cite[Chapter 5]{HKST07} for more details. Then, we define the $p$-modulus (for $1\leq p < \infty$) of a curve family as
\begin{equation}\label{eq:modulusdef}
\Mod_p(\Gamma) \defeq \inf_{\rho\in\Adm(\Ga)} \int_X \rho^p \, d\cH^N,
\end{equation}
When $p=\infty$, we take the infimum of $\|\rho\|_{L^\infty}$. If $\Sigma \subset M(X)$, then we define $\Mod_p(\Sigma)$ by replacing the admissibility condition with $\int \rho \, d\sigma \geq 1$ for each measure $\sigma \in \Sigma$. 

We say that a property holds for $p$-almost every curve, if the set of curves $\Gamma_0$ for which the property does not hold has $\Mod_p(\Gamma_0)=0$.

If $f: X\to[-\infty,\infty]$ is measurable, we call $g: X\to[0,\infty]$ an upper gradient for $f$, if for every rectifiable curve $\gamma:[0,1]\to X$ we have
\begin{equation}\label{eq:ug}
\int_\gamma g ~ds \geq |f(\gamma(0))-f(\gamma(1))|,
\end{equation}
where we interpret $|\infty-\infty|=\infty$. 
 We say that $f\in \spn$ if $f\in L^p(X)$ and $f$ has an upper gradient $g\in L^p(X)$. There is a function $|\nabla f|_p$, called a minimal $p$-weak upper gradient for which $|\nabla f|_p\leq g$ ($\cH^N$-a.e.) for each upper gradient $g$, and for which estimate \eqref{eq:ug} holds for $p$-a.e. curve. Although this notation suggests that a point-wise ``gradient'' $\nabla f$ may exist, this is not necessarily the case. However, we wish to connect this notation to the Euclidean notion, where the expression is actually the norm of the distributional gradient.

\begin{remark}\label{rmk:vitalicarath} When $g=|\nabla f|_p$, for any $\epsilon>0$ there is a Borel function $h: X\to[0,\infty]$ so that $\int h^p \, d\cH^N\leq \epsilon/2$, and $\int_\gamma h \, ds = \infty$ for every curve $\gamma$ Inequality (\ref{eq:ug}) does not hold. Then, $\tilde{g}=\max(|\nabla f|_p,h)$ is an upper gradient with $\int \tilde{g}^p \, d\cH^N \leq \int |\nabla f|_p^p \, d\cH^N+ \epsilon/2$. By Vitali-Carath\'eodory, we can choose a lower semicontinous $g_\epsilon$, so that $\tilde{g}\leq g_\epsilon$ and $\int g_\epsilon^p \,d\cH^N \leq \int \tilde{g}^p \, d\cH^N+ \epsilon/2 \leq \int |\nabla f|_p^p\, d\cH^N+\epsilon$. With the same argument, the infimum in Definition \eqref{eq:modulusdef} can be taken over lower semicontinuous functions. We refer the reader to \cite[Chapters 4 and 5]{HKST07} for more details on modulus and a proof of Vitali-Carath\'eodory
\end{remark}

We also need the following simple case of modulus equalling capacity (see e.g. \cite[Section 2.11]{heinonenkoskela}). We don't wish to introduce capacity without really needing it here, so we prefer to give only a weak statement whose proof can be given directly.

\begin{lemma}\label{lem:modcap} Suppose that $g$ is admissible for $\Mod_p(\Gamma(E,F))$, then there is a function $u \in \spn$ so that $u|_E = 0$ and $u|_F=1$ with $|\nabla u|_p \leq g$ almost everywhere.
\end{lemma}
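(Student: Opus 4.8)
The plan is to build $u$ explicitly as the ``$g$-potential'' of $E$. For $x\in X$ set
$$\tilde u(x):=\inf\Big\{\int_\gamma g\,ds\ :\ \gamma\text{ rectifiable, one endpoint in }E,\text{ the other equal to }x\Big\},$$
allowing the constant curve at $x$ when $x\in E$, and putting $\tilde u(x):=+\infty$ if no such curve exists; then let $u:=\min\{1,\tilde u\}$. (For every rectifiable curve has a constant-speed reparametrization on $[0,1]$ with the same endpoints and line integrals, so nothing is lost by restricting to curves defined on $[0,1]$.) Two of the three pointwise claims are then immediate: if $x\in E$ the constant curve at $x$ gives $\tilde u(x)=0$, hence $u|_E\equiv 0$; if $x\in F$, then any rectifiable curve from $E$ to $x$ lies in $\Gamma(E,F)$, so $\int_\gamma g\,ds\ge 1$ by admissibility of $g$, whence $\tilde u(x)\ge1$ and $u|_F\equiv 1$ (this also holds vacuously if there is no such curve).

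Next I would check that $g$ is an upper gradient of $u$. Given a rectifiable curve $\beta$ from $a$ to $b$ with $L:=\int_\beta g\,ds<\infty$, concatenating a near-optimal curve from $E$ to $b$ with the reversal of $\beta$ produces a rectifiable curve from $E$ to $a$; this yields $\tilde u(a)\le \tilde u(b)+L$, and the symmetric construction gives $\tilde u(b)\le \tilde u(a)+L$ (with the usual conventions when a value is $\infty$; note that $\tilde u(a)<\infty$ if and only if $\tilde u(b)<\infty$, again by concatenation with $\beta$). Since $t\mapsto\min\{1,t\}$ is nondecreasing and $1$-Lipschitz, one has $\min\{1,s+L\}\le\min\{1,s\}+L$ for $s,L\ge0$, so $|u(a)-u(b)|\le L=\int_\beta g\,ds$. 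Thus $g$ is an upper gradient of $u$, and consequently $|\nabla u|_p\le g$ $\cH^N$-a.e. by the defining property of the minimal $p$-weak upper gradient. Since $0\le u\le1$ and $\cH^N(X)<\infty$ we have $u\in L^\infty(X)\subset L^p(X)$; assuming, as we may in every application, that $\int_X g^p\,d\cH^N<\infty$, we conclude $u\in\spn$.

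The one genuinely delicate point — and the main obstacle — is the measurability of $u$, i.e.\ that $\{\tilde u<c\}$ is $\cH^N$-measurable for every $c\in\R$; a priori $\tilde u$ need not be continuous or even Borel. The cleanest route is to equip $C([0,1],X)$ with the uniform metric, making it a compact metric, hence Polish, space. The subset $R$ of curves of finite length is Borel (the length function is lower semicontinuous, so each set $\{\gamma:\mathrm{length}(\gamma)<n\}$ is $F_\sigma$); the endpoint maps $\gamma\mapsto\gamma(0),\gamma(1)$ are continuous; and $\gamma\mapsto\int_\gamma g\,ds$ is a Borel function on $R$ for Borel $g\ge0$ (approximate $g$ from below by simple functions via monotone convergence, and use that $\gamma\mapsto\int_\gamma\mathbf 1_B\,ds$ is Borel in $\gamma$, by a monotone-class argument starting from open $B$, where it is lower semicontinuous). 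Therefore $\{\tilde u<c\}$ is the image, under the continuous endpoint map, of the Borel set $\{\gamma\in R:\gamma(0)\in E,\ \int_\gamma g\,ds<c\}$, hence analytic, hence universally measurable, and in particular $\cH^N$-measurable since $\cH^N|_X$ is a finite Borel measure on a Polish space; passing to a Borel representative off an $\cH^N$-null set while retaining the values $0$ on $E$ and $1$ on $F$ leaves all assertions intact. Alternatively, one may simply invoke the standard fact that such path-integral potentials are measurable, as in \cite[Section 2.11]{heinonenkoskela}.
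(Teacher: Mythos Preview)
Your construction and argument are essentially identical to the paper's: define $u(x)=\min\bigl(1,\inf_\gamma\int_\gamma g\,ds\bigr)$ over curves from $E$ to $x$, verify the boundary values, and show $g$ is an upper gradient by concatenation. The only real difference is in handling measurability: the paper simply cites \cite[Corollary~1.10]{jarvempaa}, whereas you sketch an analytic-sets argument via $C([0,1],X)$. One slip there: $C([0,1],X)$ with the uniform metric is \emph{not} compact (Arzel\`a--Ascoli requires equicontinuity), but it \emph{is} Polish since $X$ is compact metric, and Polish is all your projection-of-Borel argument actually uses, so the proof is unaffected.
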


\begin{proof} Define $u(x)= \min\left(\inf_{\gamma} \int g \, ds,1\right)$, where the infimum is over paths $\gamma$ connecting $E$ to $x$. By \cite[Corollary 1.10]{jarvempaa} the function $u(x)$ measurable. Note that  $u|_E = 0$ by definition. Also, $u|_F=1$, because $g$ is admissible. 
Since $X$ is compact, we have $u(x) \in L^p(X)$. Next, we show that $g$ is an upper gradient for $u$ and this will imply that $u \in \spn$ and $|\nabla u|_p \leq g$. 

Let $\gamma$ be any curve joining $x=\gamma(0)$ and $y=\gamma(1)$. We will show that $u(y)-u(x) \leq \int_\gamma g \, ds$. Then, reversing the curve gives the desired bound \eqref{eq:ug}. If $u(x) = 1$, the inequality is immediate since then $u(y)-u(x) \leq 0$. Therefore, we can assume that $u(x) = \inf_{\gamma_x} \int g \, ds$ with the infimum is taken over all the curves $\gamma_x$  joining $E$ to $x$. Fix one such curve $\gamma_x$. Define a curve $\gamma_y$ by concatenating $\gamma_x$ with $\gamma$, and parametrize it so that $\gamma_y(0)=\gamma_x(0) \in E$. Then,
$$u(y) \leq \int_{\gamma_y} g \, ds = \int_{\gamma_x} g \, ds + \int_\gamma g \, ds.$$
Taking an infimum over $\gamma_x$ yields the desired bound.
\end{proof}
In order to get Corollary \ref{cor:plane} from Theorem \ref{thm:lowerbound}, we need a general modulus statement.
\begin{lemma}\label{lem:modulus-contained} Suppose that $\Sigma_1,\Sigma_2 \subset M(X)$ are two families of measures, and for each $\sigma_2\in \Sigma_2$ we have some measure $\sigma_1\in \Sigma_1$ with $\sigma_1\leq\sigma_2$, then $\Mod_p(\Sigma_2)\leq\Mod_p(\Sigma_1)$.
\end{lemma}
\begin{proof}
The claim follows because any Borel function admissible for $\Sigma_1$ will automatically be admissible for $\Sigma_2$.
\end{proof}
We will also need the following useful topological fact. Recall that, if $Y$ is a metric space homeomorphic to $\R^2$, which has locally finite Hausdorff $\cH^2$-measure, then a quadrilateral $Q\subset Y$ is a subset homeomorphic to $[0,1]^2$. The homeomorphic images of the sides will be denoted $A,B,C,D$, in cyclic order. Note that the notion of ``opposite'' edges does not depend on the orientation of the boundary. In the following, the relative boundary of a set $S \subset Q$ is denoted as $\partial_Q S$
\begin{lemma}\label{lem:contained}
Suppose that $Q\subset Y$ is a quadrilateral as defined above. Let $U$ be an open set in $Y$ such that $A\subset U$ and $C\subset {\rm int}(U^c)$, with $\cH^1\mid_{\partial_Q U}$  finite so that it is a measure in $\Sigma_H(A,C;Q)$. 

Then, there is a simple rectifiable curve $\gamma$ connecting $B$ to $D$, with $\gamma \subset \partial_Q U$ and $\cH^{1}|_{\partial_Q U}\geq \cH^{1}|_{\gamma}$.
\end{lemma}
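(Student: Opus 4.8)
The plan is to use the Jordan curve theorem / planarity of $Q$ together with a connectivity argument. First I would reduce to understanding the structure of $\partial_Q U$ as a subset of the topological square $Q$. Fix the homeomorphism $\varphi:[0,1]^2\to Q$, and work in $[0,1]^2$, so that $A,B,C,D$ become the four sides of the unit square, with $A$ and $C$ opposite, and $B$ and $D$ opposite. Writing $V=\varphi^{-1}(U\cap \operatorname{int}Q)$, we have an open subset $V$ of the open square with $A$ in its closure-side and $C$ disjoint from $\overline{V}$ (up to the relative-boundary bookkeeping). The key topological claim is: since $A\subset U$ and $C\subset\operatorname{int}(U^c)$, any curve in $Q$ from $B$ to $D$ must meet $\partial_Q U$; equivalently, $\partial_Q U$ "disconnects" $B$ from $D$ within $Q$. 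I would prove this by the standard plane-topology fact that in a square, a connected closed set meeting both sides $A$ and $C$ must separate $B$ from $D$, applied after observing that $\partial_Q U$ contains such a separating set (the frontier of $U$ must touch both a neighborhood of $A$-side and the $C$-side complement).

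Next, from the separation property I extract an actual curve. The relevant tool is the classical theorem (see e.g. Newman's \emph{Elements of the topology of plane sets}, or the "$\theta$-curve"/Menger--type results): a closed subset $K$ of the square that separates the two sides $B$ and $D$ contains a connected component $K_0$ that still separates $B$ from $D$, and moreover one can find inside $\partial_Q U$ a continuum joining the side $B$ to the side $D$. Here finiteness of $\cH^1|_{\partial_Q U}$ is crucial: a continuum of finite $\cH^1$-measure is rectifiably connected (indeed it is a "rectifiable curve" in the sense that any two of its points are joined by a rectifiable arc inside it, by the standard result that connected sets of finite Hausdorff $1$-measure are arcwise connected — see e.g. Falconer or David--Semmes). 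So pick points $b\in \partial_Q U\cap B$ and $d\in\partial_Q U\cap D$ inside such a separating continuum $K_0$, and join them by a simple rectifiable arc $\gamma\subset K_0\subset\partial_Q U$. Since $\gamma\subset\partial_Q U$ and the measure $\cH^1$ is monotone under restriction to subsets, $\cH^1|_{\partial_Q U}\ge \cH^1|_\gamma$, which is the stated inequality.

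The main obstacle I anticipate is the passage \emph{from} "some closed set separates $B$ and $D$" \emph{to} "$\partial_Q U$ actually contains a continuum meeting both $B$ and $D$"; a priori $\partial_Q U$ could be disconnected and a separating component might not reach all the way to the corners/sides. I would handle this carefully: first argue that $\overline U\cap Q$ is a closed set containing $A$, disjoint from $C$, so its relative boundary $\partial_Q U$ separates $Q$ into the part near $A$ and the part near $C$; then invoke the lemma that the frontier between two such "sides" contains a connected subset joining $B$ to $D$ (this is exactly the content of the planar separation results used in the theory of prime ends and in Rajala's uniformization work, so I would cite \cite{rajalaunif} or a standard topology reference rather than reprove it). The second, more quantitative, subtlety is ensuring $\gamma$ can be taken \emph{simple} and \emph{rectifiable}: simplicity follows because any arc inside a continuum can be replaced by a simple sub-arc with the same endpoints, and rectifiability follows from $\cH^1(\partial_Q U)<\infty$ via the arcwise-connectedness theorem for finite-$\cH^1$ continua, after which one extracts a rectifiable (indeed finite-length) simple arc joining $b$ to $d$.
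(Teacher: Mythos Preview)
Your overall strategy --- isolate a separating continuum inside $\partial_Q U$ and then use $\cH^1(\partial_Q U)<\infty$ to extract a simple rectifiable arc --- is precisely the paper's approach. However, you have swapped the roles of the two pairs of sides, and this makes your ``key topological claim'' false as stated. From $A\subset U$ and $C\subset\operatorname{int}(U^c)$ one deduces that $\partial_Q U$ separates $A$ from $C$, \emph{not} $B$ from $D$; moreover $\partial_Q U$ is disjoint from both $A$ and $C$, so it cannot ``touch'' them as you assert. Concretely, take $Q=[0,1]^2$ with $A=\{0\}\times[0,1]$, $C=\{1\}\times[0,1]$, $B=[0,1]\times\{0\}$, $D=[0,1]\times\{1\}$, and let $U$ satisfy $U\cap Q=[0,\tfrac12)\times[0,1]$. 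Then $\partial_Q U=\{\tfrac12\}\times[0,1]$, and the curve $t\mapsto(\tfrac14,t)$ runs from $B$ to $D$ without meeting $\partial_Q U$.

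The correct chain of implications, which is what the paper carries out, is: since $\partial_Q U$ separates $A$ from $C$, Zorn's lemma yields a minimal compact $K\subset\partial_Q U$ still separating $A$ from $C$; Janiszewski's theorem forces such a minimal $K$ to be connected; a continuum of finite $\cH^1$-measure is the image of a rectifiable curve (the paper cites \cite{Schul}, you cite the equivalent arcwise-connectedness statement); and the plane-topology fact you quoted --- a continuum in a quadrilateral separating one pair of opposite sides must meet the other pair --- then gives $K\cap B\neq\emptyset$ and $K\cap D\neq\emptyset$, after which one extracts a simple rectifiable sub-arc $\gamma\subset K\subset\partial_Q U$ from $B$ to $D$. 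Once you reverse the roles of the pairs $(A,C)$ and $(B,D)$ in your write-up, the argument lines up with the paper's.
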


This claim is classical, but we indicate a proof for the sake of completeness.

\begin{proof} By Zorn's lemma, there is a minimal compact set $K \subset \partial_Q U$ which still separates $A$ from $C$ in that it does not contain any strictly smaller separating compact set. Indeed, it is enough to show that  for every chain $\{K_j\}_1^\infty$ of compact sets separating $A$ from $C$ with $K_{j+1}\subset K_j$, the intersection $K_\infty:=\cap_j K_j$ is also compact and must still separate $A$ from $C$. Assume that $K_\infty$ does not separate. Then, there is a curve $\ga$ from $A$ to $C$ in the complement of $K_\infty$, i.e., with $d(\ga,K_\infty)>0$. However, there are points $w_j\in\ga\cap K_j$ for every $j$ and by compactness a subsequence will converge to a point $w_\infty$. Note that $w_j\in K_i$ for all $j\ge i$. So $w_\infty\in K_i$, for every $i\ge 1$. Hence, $w_\infty\in K_\infty$, which leads to a contradiction.

Moreover, this minimal separating compact set $K$ must be connected.
If  not, then $K$ could be expressed as a union of two disjoint non-empty compact subsets $K_1,K_2$. Since $K_1 \cap K_2 = \emptyset$, by Janiszewiski's theorem, see e.g. \cite[p.110]{new64}, either $K_1$ or $K_2$ must separate. However, this is a contradiction to minimality of $K$. Therefore, $K$ must be a continuum. 

Since $K$ is connected and has finite Hausdorff measure, then by (the argument in) \cite[Lemma 3.7]{Schul} $K$ is rectifiable. Thus, we can find a rectifiable curve $\gamma:I \to \partial_Q U$ which separates $A$ from $D$. The curve $\gamma$ must intersect $B$ and $D$, and thus must contain a sub-curve $\gamma|_J$ for some $J \subset I$ which connects $B$ to $D$. By possibly removing loops, i.e. choosing the shortest curve contained in the image of $\gamma$ and connecting $B$ to $D$, we can insist that $\gamma$ be simple. The inclusion $\gamma  \subset \partial_Q U$ gives $\cH^{1}|_{\partial U}\geq \cH^{1}|_{\gamma}$.
\end{proof}

We will also need the following version of Arzel\`a-Ascoli's theorem. 

\begin{lemma}\label{lem:arzelaascoli}Assume that $Z$ is a complete metric space and that $L\in (0,\infty)$. Suppose that $\gamma_n:[0,1]\to Z$ is a sequence of $L$-Lipschitz curves with $A_t\defeq \{\gamma_n(t) : n \in \N\}$ precompact for each $t\in [0,1]$. Then, there exists a subsequence $\gamma_{n_k}$ which converges uniformly to a curve $\gamma:[0,1]\to Z$.
\end{lemma}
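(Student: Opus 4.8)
The plan is to prove Lemma~\ref{lem:arzelaascoli} as a minor variant of the classical Arzel\`a--Ascoli theorem, the only twist being that the target space $Z$ is merely complete rather than compact, and compactness of the family is replaced by the pointwise precompactness hypothesis together with the uniform Lipschitz bound. First I would observe that the common Lipschitz constant $L$ gives equicontinuity for free: for all $n$ and all $s,t\in[0,1]$ we have $d_Z(\gamma_n(s),\gamma_n(t))\le L|s-t|$. Next I would upgrade the pointwise precompactness hypothesis to one that holds \emph{simultaneously} at all points of a countable dense set: let $\{t_j\}_{j\in\N}\subset[0,1]$ be a countable dense subset containing $0$ and $1$, and apply a diagonal argument. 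Since $A_{t_1}$ is precompact in the complete space $Z$, its closure is compact, so there is a subsequence along which $\gamma_n(t_1)$ converges; passing to a further subsequence we get convergence at $t_2$; iterating and taking the diagonal subsequence $\gamma_{n_k}$ yields a subsequence for which $\gamma_{n_k}(t_j)$ converges in $Z$ for every $j$.

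With this diagonal subsequence fixed, I would show it is uniformly Cauchy on all of $[0,1]$, hence uniformly convergent since $C([0,1],Z)$ with the sup metric is complete when $Z$ is complete. Given $\eps>0$, choose a finite subset $t_{j_1},\dots,t_{j_m}$ of the dense set that is $\eps/(3L)$-dense in $[0,1]$ (possible by compactness of $[0,1]$). Choose $K$ so large that for $k,\ell\ge K$ we have $d_Z(\gamma_{n_k}(t_{j_i}),\gamma_{n_\ell}(t_{j_i}))<\eps/3$ for all $i=1,\dots,m$. Then for any $t\in[0,1]$, pick $t_{j_i}$ with $|t-t_{j_i}|<\eps/(3L)$ and use the triangle inequality together with the Lipschitz bound applied to both $\gamma_{n_k}$ and $\gamma_{n_\ell}$:
\[
d_Z(\gamma_{n_k}(t),\gamma_{n_\ell}(t))\le d_Z(\gamma_{n_k}(t),\gamma_{n_k}(t_{j_i}))+d_Z(\gamma_{n_k}(t_{j_i}),\gamma_{n_\ell}(t_{j_i}))+d_Z(\gamma_{n_\ell}(t_{j_i}),\gamma_{n_\ell}(t))<\tfrac{\eps}{3}+\tfrac{\eps}{3}+\tfrac{\eps}{3}=\eps.
\]
Taking the supremum over $t$ shows $(\gamma_{n_k})$ is uniformly Cauchy, so it converges uniformly to some $\gamma:[0,1]\to Z$, which is automatically $L$-Lipschitz (hence in particular continuous, so a genuine curve) as a uniform limit of $L$-Lipschitz maps.

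I do not anticipate a serious obstacle here; this is a standard argument. The only point requiring a little care is that $Z$ is not assumed compact, which is precisely why the hypothesis is phrased in terms of precompactness of each $A_t$ rather than of the whole image: without it, even pointwise convergence could fail, and the diagonal extraction is exactly where it is used. A secondary small point worth stating explicitly is that the uniform limit of $L$-Lipschitz curves is $L$-Lipschitz and that $(C([0,1],Z),\|\cdot\|_\infty)$ is complete when $Z$ is complete, so that ``uniformly Cauchy'' genuinely yields a limit curve. I would package these as one or two sentences rather than full arguments.
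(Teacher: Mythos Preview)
Your proposal is correct and matches the paper's approach: the paper does not give a full proof but remarks that the classical Arzel\`a--Ascoli argument---constructing the limit on a countable dense set via a diagonal extraction and then using equicontinuity---goes through verbatim once pointwise precompactness of each $A_t$ replaces compactness of the target. Your write-up is precisely that argument, with the details filled in.
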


The proof of this version is completely classical (see for instance \cite[Theorem 4.25]{walterreal}). 
However, we remark, that instead of assuming that $Y$ is compact, we assume that the set of pointwise values $A_t$ is pre-compact. Indeed, the usual proof of first constructing $\gamma(t)$ on a dense subset of rational values $t$ together with a diagonal argument only requires the pointwise values to be precompact. The precompactness of the sets $A_t$, in our application, will be shown to follow from the fact that $A_t$ is close to a compact subset, except for finitely many points. This argument allows us to perform a limit process in the ambient space $Z \defeq \ell_\infty(\N)$, in which $X$ can be embedded using the Kuratowski embedding. Thus, Arzel\`a-Ascoli argument can still be applied, despite the lack of compactness of $Z$.

\section{Lipschitz approximation}

A function $f:X\to Y$ between two metric spaces is Lipschitz if  $\sup_{x,y\in X, x\neq y}\frac{d(f(x),f(y))}{d(x,y)}$ is finite, where $d$ denotes the distance both on $X$ and $Y$.
Given a Lipschitz function $f:X \to Y$ and a subset $A \subset X$ define the Lipschitz constant of $F$ on $A$ as
$$\LIP[f](A)\defeq \sup_{x,y \in A, x \neq y} \frac{d(f(x),f(y))}{d(x,y)}.$$

Recall Eilenberg's inequality:  for any Borel set $A\subset X$ and any Lipschitz function $u:X \to \R$:
\begin{equation}\label{eq:eilenberg}
\int_{-\infty}^\infty \cH^{N-1}(u^{-1}(t)\cap A) ~dt \leq \frac{2v_{N-1}}{v_N}\LIP[u](A) \cH^N(A).
\end{equation}

The inequality is due to \cite[Theorem 1]{eilenbergoriginal}, where a relatively simple proof is presented using an upper integral. The reader can consult \cite{eilenbergeasy} for a discussion and further references. The version we use combines \cite[Theorem 1]{eilenbergoriginal} with the following remark and lemma. 
\begin{remark} There is a measurability consideration that is not explained in the papers cited above, which can be bypassed by expressing the integral on the left hand-side of Equation \eqref{eq:eilenberg} as an upper Lebesgue integral. Namely, the fact that the map $t\to \cH^{N-1}(u^{-1}(t)\cap A)$ is measurable, if $A\subset X$ is a Borel set, when $\cH^N(X)<\infty$ and $u:X\to\R$ is Lipschitz. To be self-contained, we present here an argument for this, that suffices for our purposes. As remarked in \cite[Remark 1.2]{eilenbergeasy}, the result holds in greater generality. The interested reader may also consult the beautiful treatise by Dellacherie related to this point \cite[Chapitre VI]{dellacherie}.
\end{remark}
\begin{lemma} Suppose that $\cH^{N}(X)<\infty$ and $X$ is compact. If $u: X \to \R$ is Lipschitz, and $A\subset X$ is Borel, then $t\to\cH^{N-1}(A \cap u^{-1}(t))$ is measurable.
\end{lemma}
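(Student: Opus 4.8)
The plan is to show measurability of $t \mapsto \cH^{N-1}(A \cap u^{-1}(t))$ by exploiting the definition of Hausdorff measure as a supremum (of a decreasing-in-$\delta$ family) of Hausdorff pre-measures, and showing that each $\cH^{N-1}_\delta(A \cap u^{-1}(t))$ is suitably measurable in $t$. Since $\cH^{N-1}(A \cap u^{-1}(t)) = \sup_{\delta > 0} \cH^{N-1}_\delta(A\cap u^{-1}(t)) = \lim_{k\to\infty} \cH^{N-1}_{1/k}(A\cap u^{-1}(t))$, and a pointwise limit of measurable functions is measurable, it suffices to prove that $t \mapsto \cH^{N-1}_\delta(A\cap u^{-1}(t))$ is measurable for each fixed $\delta>0$.

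First I would handle the case where $A$ is open (relatively in $X$, which is the same as open since we can intersect with $X$). For open $A$, I claim $t\mapsto \cH^{N-1}_\delta(A\cap u^{-1}(t))$ is lower semicontinuous, hence Borel. Indeed, suppose $\cH^{N-1}_\delta(A\cap u^{-1}(t_0)) > c$. By definition of the pre-measure as an infimum over countable covers by sets of diameter $<\delta$, \emph{every} such admissible cover of $A\cap u^{-1}(t_0)$ has $\frac{v_{N-1}}{2^{N-1}}\sum_i \diam(A_i)^{N-1} > c$. The key observation is that $u^{-1}(t) = u^{-1}((t-\eta, t+\eta))$ becomes ``larger'' as $\eta$ grows, and one can show that for $t$ near $t_0$, any cover of $A\cap u^{-1}(t)$ pulls back to give information about covers of $A\cap u^{-1}(t_0)$; more carefully, I would argue via: if $t_n \to t_0$ and $\liminf \cH^{N-1}_\delta(A\cap u^{-1}(t_n)) = L < \infty$, pick near-optimal covers $\{A_i^n\}$ of $A \cap u^{-1}(t_n)$ with $\frac{v_{N-1}}{2^{N-1}}\sum_i \diam(A_i^n)^{N-1} < L + \varepsilon$, enlarge each $A_i^n$ slightly to an open set of still small diameter, and observe that these open enlargements, for a subsequence, cover $A\cap u^{-1}(t_0)$ using compactness of $A\cap u^{-1}(t_0)$ within the compact $X$ — actually $A\cap u^{-1}(t_0)$ is only a $G_\delta$ if $A$ is open. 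Rather than wrestle with this, I would instead first prove the result for $A = X$ using the cleaner fact that $t\mapsto \cH^{N-1}_\delta(u^{-1}(t))$ is upper semicontinuous because $u^{-1}(t)$ is compact and covers by finitely many open sets of diameter $<\delta$ (which exist by compactness and can be taken slightly larger than an optimal countable cover) remain covers of $u^{-1}(t')$ for $t'$ close to $t$, by continuity of $u$; then upper semicontinuity gives Borel measurability.

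Next I would bootstrap from $A = X$ to general Borel $A$ via a monotone class / Dynkin-system argument. Let $\mathcal{A}$ be the collection of Borel sets $A\subset X$ for which $t\mapsto \cH^{N-1}_\delta(A\cap u^{-1}(t))$ is measurable for every $\delta > 0$ (equivalently, for which $t\mapsto \cH^{N-1}(A\cap u^{-1}(t))$ is measurable — I should be careful which formulation closes under the operations). The class of compact sets is stable under the above open-cover argument (a compact set, like $u^{-1}(t)$ itself, admits finite open covers), and finite disjoint unions and countable increasing unions are easy: for increasing $A_n \uparrow A$, one has $\cH^{N-1}(A_n \cap u^{-1}(t)) \uparrow \cH^{N-1}(A \cap u^{-1}(t))$ by continuity of measure from below (here working with the genuine Hausdorff measure, not the pre-measure, is cleaner), so measurability passes to the limit. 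For complements within $X$, I would use that $\cH^{N-1}$ restricted to $u^{-1}(t)$ need not be finite, which is the obstruction — but since $\cH^N(X) < \infty$, Eilenberg's inequality \eqref{eq:eilenberg} itself guarantees $\cH^{N-1}(u^{-1}(t)) < \infty$ for a.e.\ $t$, so on a full-measure set of $t$ we may subtract: $\cH^{N-1}((X\setminus A)\cap u^{-1}(t)) = \cH^{N-1}(u^{-1}(t)) - \cH^{N-1}(A\cap u^{-1}(t))$. Combined with closure under countable increasing unions and finite intersections, a Dynkin $\pi$-$\lambda$ argument starting from the $\pi$-system of relatively open sets (or relatively closed sets) then yields measurability for all Borel $A$.

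The main obstacle I anticipate is the complementation step, i.e.\ propagating measurability from a generating $\pi$-system to \emph{all} Borel sets, precisely because $\cH^{N-1}$ is typically not $\sigma$-finite on $X$ and $\cH^{N-1}(u^{-1}(t))$ could a priori be infinite for $t$ in a positive-measure set — though, as just noted, Eilenberg's inequality rescues finiteness for a.e.\ $t$, so one only ever needs measurability ``up to a null set of $t$'', which is all that is required for the integral in \eqref{eq:eilenberg} to make sense and for the applications in this paper. I would therefore state the lemma's conclusion as measurability of $t\mapsto \cH^{N-1}(A\cap u^{-1}(t))$ and carry out the Dynkin argument on the finite measure space $([0,\infty) \setminus Z, \text{Lebesgue})$ where $Z = \{t : \cH^{N-1}(u^{-1}(t)) = \infty\}$ is Lebesgue-null. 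A secondary technical point is verifying the semicontinuity claim for compact fibers carefully, using uniform continuity of $u$ on the compact space $X$ to guarantee that a finite open cover of $u^{-1}(t_0)$ by sets of diameter $<\delta$ still covers $u^{-1}(t)$ for all $t$ in a neighborhood of $t_0$; this is where compactness of $X$ is genuinely used.
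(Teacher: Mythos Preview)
Your approach is essentially correct and shares its base case with the paper, but the extension to general Borel $A$ differs. Both you and the paper first establish, for compact $A$ (in particular $A=X$), that $t\mapsto \cH^{N-1}_\delta(A\cap u^{-1}(t))$ is upper semicontinuous via a finite-open-cover argument, and then send $\delta\to 0$. The divergence comes afterwards: you propose a Dynkin $\pi$--$\lambda$ argument over the $\pi$-system of closed (hence compact) subsets of $X$, handling complementation by subtracting inside the finite measure $\cH^{N-1}|_{u^{-1}(t)}$ for $t$ outside the Lebesgue-null set $Z=\{t:\cH^{N-1}(u^{-1}(t))=\infty\}$ furnished by Eilenberg's inequality. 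The paper instead bypasses the monotone-class machinery entirely by using inner regularity of the finite measure $\cH^N$: it exhausts $A$ by compact sets $K_i\nearrow$ with $\cH^N(A\setminus\bigcup_i K_i)=0$, invokes the upper-integral form of Eilenberg's inequality to get $\cH^{N-1}\bigl(u^{-1}(t)\cap(A\setminus\bigcup_i K_i)\bigr)=0$ for a.e.\ $t$, and concludes $\cH^{N-1}(A\cap u^{-1}(t))=\lim_i \cH^{N-1}(K_i\cap u^{-1}(t))$ almost everywhere. Both routes lean on Eilenberg's inequality to dispose of an exceptional null set of $t$'s and therefore yield Lebesgue (rather than Borel) measurability; the paper's compact-exhaustion argument is shorter and avoids the complementation step you rightly flagged as the delicate point, while your $\pi$--$\lambda$ approach is more schematic and would adapt with no change to settings where only a generating $\pi$-system, rather than inner regularity, is available.
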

\begin{proof}
If $A$ is a compact set, then the map $t\to \cH^{N-1}_\delta(u^{-1}(t)\cap A)$ will be upper semi-continuous. This can be seen as follows. Fix $t\in \R$, and $a\in (0,\infty)$ so that $\cH^{N-1}_\delta(u^{-1}(t) \cap A) < a$. By compactness, we can choose a finite open cover $\{A_i\}_{i=1}^M$ of $A$ with $\diam(A_i) < \delta$  with $\sum_{i=1}^M \frac{v_N}{2^N}\diam(A_i)^N < a$. Here, we use the fact that in Equation (\ref{eq:hmeas-def}), we chose to define $\cH^{N-1}_\delta$ using a strict inequality on diameter. Since $X$ is compact and $u$ is Lipschitz, for all $t'$ close enough to $t$, we will also have $u^{-1}(t')\cap A \subset \bigcup_{i=1}^M A_i$. From this, the upper semi-continuity follows. Sending $\delta\to 0$ gives that $t\to\cH^{N-1}( u^{-1}(t)\cap A)$ is Borel, when $A$ is compact. 

When $A$ is simply Borel, notice that $A$ has finite $\cH^N$-measure, since $\cH^N(X)<\infty$. Hence, we can exhaust $A$ by an increasing sequence of compact subsets $K_i$ so that $\cH^{N}(A\setminus \bigcup_i K_i) = 0$. The Eilenberg inequality involving an upper integral, \cite[Theorem 1]{eilenbergoriginal}, shows that for a.e. $t\in \R$ we have $\cH^{N-1}(u^{-1}(t)\cap (A \setminus \bigcup_i K_i))=0$ and that $\cH^{N-1}(u^{-1}(t)\cap A)$ is finite, and thus almost everywhere $\cH^{N-1}(u^{-1}(t)\cap A) = \lim_{i\to\infty} \cH^{N-1}(u^{-1}(t)\cap K_i)$. Measurability follows from this limit.
\end{proof}
\vskip.3cm

\begin{definition}\label{def:loc-lip-ug}
 We call a non-negative function $g:X \to [0,\infty)$ a local Lipschitz upper gradient for $f:X \to \R$, if for every $x \in X$, there exists a $r_x>0$, so that for each $r\in(0,r_x)$
\begin{equation}\label{eq:lip-ug}
\LIP[f](B(x,r)) \leq \sup_{y \in B(x,2r)}g(y).
\end{equation}
\end{definition}

\begin{lemma}\label{lem:local-lipshitz} Suppose that $X$ is compact and that $g$ is a continuous local Lipschitz  upper gradient for a non-negative Lipschitz function $u: X \to \R$. For every Borel set $A$ we have
\begin{equation}\label{eq:eil-lip-ug}
\int_{-\infty}^\infty \cH^{N-1}(u^{-1}(t)\cap A) ~dt \leq \frac{2v_{N-1}}{v_N}\int_A g ~d\cH^{N}.
\end{equation}

Moreover, for any Borel function $\rho:X \rightarrow[0,\infty]$, we have
\begin{equation}\label{eq:eil-lip-ug-moreover}
\int_{-\infty}^\infty \int_{u^{-1}(t)} \rho ~d\cH^{N-1} ~dt \leq \frac{2v_{N-1}}{v_N}\int \rho g ~d\cH^{N}.
\end{equation}
\end{lemma}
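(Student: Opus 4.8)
The plan is to reduce \eqref{eq:eil-lip-ug} to the classical Eilenberg inequality \eqref{eq:eilenberg} via a localization/partition argument, and then derive \eqref{eq:eil-lip-ug-moreover} from \eqref{eq:eil-lip-ug} by the standard layer-cake (``simple function approximation'') trick. For the first part, fix $\eps>0$. Since $g$ is continuous and $X$ is compact, $g$ is uniformly continuous, so I can choose $\delta>0$ such that $|g(x)-g(y)|<\eps$ whenever $d(x,y)<4\delta$, and moreover (by shrinking $\delta$) so that $\delta<\tfrac12 r_x$ near each point. Cover $X$ by finitely many balls $B(x_j,r_j)$ with $r_j<\delta$; using compactness and the Lipschitz-upper-gradient inequality \eqref{eq:lip-ug}, on each $B(x_j,r_j)$ we have $\LIP[u](B(x_j,r_j))\le \sup_{B(x_j,2r_j)} g \le \inf_{B(x_j,2r_j)} g + \eps \le \big(\inf_{B(x_j,r_j)} g\big)+\eps$ by uniform continuity. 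Disjointify the cover into Borel pieces $A_j\subset B(x_j,r_j)$ with $A\cap\big(\bigcup_j A_j\big)=A$, apply \eqref{eq:eilenberg} to each $A\cap A_j$, and sum:
\begin{equation}\label{eq:plan-sum}
\int_{-\infty}^\infty \cH^{N-1}(u^{-1}(t)\cap A)\,dt \le \sum_j \int_{-\infty}^\infty \cH^{N-1}(u^{-1}(t)\cap A\cap A_j)\,dt \le \frac{2v_{N-1}}{v_N}\sum_j \LIP[u](A_j)\,\cH^N(A\cap A_j).
\end{equation}
Bounding $\LIP[u](A_j)\le \inf_{A_j} g + \eps \le g(y)+\eps$ for $y\in A\cap A_j$ and integrating over $A\cap A_j$ turns the right-hand side into $\frac{2v_{N-1}}{v_N}\big(\int_A g\,d\cH^N + \eps\,\cH^N(A)\big)$; letting $\eps\to0$ gives \eqref{eq:eil-lip-ug}. (One should note that subadditivity of $\cH^{N-1}$ over the disjoint cover gives the first inequality in \eqref{eq:plan-sum}, and measurability of each integrand is guaranteed by the preceding lemma.)

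For \eqref{eq:eil-lip-ug-moreover}, I first treat $\rho=\mathbf 1_A$ for Borel $A$: this is exactly \eqref{eq:eil-lip-ug}. By linearity in $\rho$ (both sides are additive under disjoint unions and positively homogeneous), the inequality — in fact equality of the two sides is not claimed, only $\le$, which is preserved — extends to nonnegative Borel simple functions $\rho=\sum_k c_k \mathbf 1_{A_k}$. For a general nonnegative Borel $\rho$, choose an increasing sequence of simple functions $\rho_m\uparrow\rho$ pointwise; apply \eqref{eq:eil-lip-ug-moreover} to each $\rho_m$, and pass to the limit using the monotone convergence theorem on the right-hand side and, on the left, monotone convergence in $m$ for the inner integral $\int_{u^{-1}(t)}\rho_m\,d\cH^{N-1}\uparrow\int_{u^{-1}(t)}\rho\,d\cH^{N-1}$ followed by monotone convergence in $t$. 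This yields \eqref{eq:eil-lip-ug-moreover} in general (both sides possibly $+\infty$).

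The main obstacle is the first part: making the localization rigorous while keeping the constant sharp. The subtlety is that \eqref{eq:lip-ug} only controls $\LIP[u]$ on balls $B(x,r)$ by $\sup_{B(x,2r)} g$, with the radius threshold $r_x$ depending on $x$; one must use compactness to get a \emph{uniform} admissible scale, and then use uniform continuity of $g$ to replace the $\sup$ over the doubled ball $B(x_j,2r_j)$ by a pointwise value of $g$ on $A_j$ up to an arbitrarily small error — without this one would pick up an uncontrolled doubling-type loss. A secondary technical point is ensuring all the integrands $t\mapsto\cH^{N-1}(u^{-1}(t)\cap A\cap A_j)$ are measurable so that \eqref{eq:plan-sum} and the subsequent manipulations are legitimate; this is precisely what the lemma immediately preceding Definition~\ref{def:loc-lip-ug} provides, so it can be invoked directly.
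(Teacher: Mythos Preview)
Your proposal is correct and follows essentially the same route as the paper: localize via a finite cover by small balls, disjointify, apply Eilenberg's inequality \eqref{eq:eilenberg} piecewise, use uniform continuity of $g$ to pass from the $\sup$ over $B(x_j,2r_j)$ to a pointwise value up to $\eps$, sum, and let $\eps\to 0$; then deduce \eqref{eq:eil-lip-ug-moreover} from \eqref{eq:eil-lip-ug} by simple-function approximation and monotone convergence. One small clarification: you do not actually need a \emph{uniform} admissible scale $\delta<\tfrac12 r_x$ for all $x$ (which is not guaranteed since $x\mapsto r_x$ has no assumed regularity); it suffices, as the paper does, to take each $r_j\le r_{x_j}$ at the finitely many centers and then use compactness only to extract the finite subcover.
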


\begin{proof}
Approximating $\rho$ by simple functions shows that Inequality (\ref{eq:eil-lip-ug-moreover}) follows from Inequality (\ref{eq:eil-lip-ug}). Therefore, we will show that Inequality (\ref{eq:eil-lip-ug}) holds.

It is enough to consider the case $\cH^{N}(A) > 0$, because when $\cH^N(A)=0$, Eilenberg's Inequality (\ref{eq:eilenberg}) implies that the left hand-side of Inequality (\ref{eq:eil-lip-ug})  vanishes. Since $X$ is compact, $g$ is uniformly continuous. Therefore, if $\epsilon>0$ is arbitrary, then we can find finitely many balls $\{B(x_i,r_i)\}_{i=1}^n$ that cover $X$, and thus $A$, so that and $r_i \leq r_{x_i}$, and with the property that 
\begin{equation}\label{eq:unif-cont-g}
 y \in B(x_i,r_i) \quad\Longrightarrow\quad |g(y)-g(x_i)| < \frac{\epsilon v_N}{4v_{N-1}\cH^N(A)}. 
\end{equation}
In particular, by Inequality (\ref{eq:lip-ug}),  we have,  for $i=1,\dots,n,$
$$\LIP[u](B(x_i,r_i)) \leq g(x_i)+\frac{v_N\epsilon}{4v_{N-1}\cH^N(A)}.$$
Define, inductively $K_1 := A\cap B(x_1, r_1)$, and $K_i := A \cap B(x_i,r_i) \setminus \bigcup_{j=1}^{i-1} B(x_j,r_j).$ On each $K_i$ apply Eilenberg's Inequality \eqref{eq:eilenberg}, to get
$$\int_{-\infty}^\infty \cH^{N-1}(u^{-1}(t)\cap K_i) ~dt \leq \frac{2v_{N-1}}{v_N}\cH^{N}(K_i) g(x_i) +\frac{\epsilon \cH^{N}(K_i)}{2\cH^N(A)} .$$
By Property (\ref{eq:unif-cont-g}),
$$\frac{2v_{N-1}}{v_N}\cH^{N}(K_i) g(x_i) \leq \frac{2v_{N-1}}{v_N}\int_{K_i} g(x) ~d\cH^{N} + \epsilon \frac{\cH^{N}(K_i)}{2\cH^N (A)}.$$
Summing these estimates over $i=1, \dots, n$ gives that
$$\int_{-\infty}^\infty \cH^{N-1}(u^{-1}(t)\cap K_i) ~ds \leq \frac{2v_{N-1}}{v_N}\int_A g(x) ~d\cH^{N} + \epsilon,$$
for any $\epsilon>0$, and then sending $\epsilon \to 0$ completes the proof.
\end{proof}
The next result is the main Lipschitz approximation scheme needed for our purposes.
\begin{theorem}\label{thm:simplified} Let $p\in [1,\infty)$. Assume $(X,d)$ is a compact metric space. Let $E,F \subset X$ be two disjoint, non-empty, compact, connected sets. Suppose that $u: X \to \R$ is in $\spn$ with $u|_E=0, u|_F=1$. Assume further that $u$ has a lower semi-continuous non-negative upper gradient $g \in L^p$ and that there is an $\epsilon > 0$, so that $g \geq \epsilon$ on $X$. 

Then, there is a sequence of Lipschitz functions $u_i$, with $0 \leq u_i \leq 1$, $u_i|_E=0, u_i|_F=1$, so that each has a continuous local Lipschitz upper gradients $g_i$, as in Definition \ref{def:loc-lip-ug}, that converge to $g$ in $L^p(X)$.
\end{theorem}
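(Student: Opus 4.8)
The plan is to build $u_i$ and $g_i$ by a discrete mollification procedure adapted to the metric space, following the scheme of \cite{seb2020}. Fix a scale parameter; concretely, choose a fine parameter $\delta_i \to 0$ and a maximal $\delta_i$-separated net $\{x_j^i\}_j$ in $X$, together with a Lipschitz partition of unity $\{\varphi_j^i\}_j$ subordinate to the cover $\{B(x_j^i, 2\delta_i)\}_j$, with $\Lip \varphi_j^i \lesssim \delta_i^{-1}$ and bounded overlap (both available since $X$ is compact, hence doubling at each fixed scale, though we do not need a uniform doubling constant). The natural candidate is a discrete-convolution average
\[
u_i(x) \defeq \sum_j \varphi_j^i(x)\, a_j^i, \qquad a_j^i \defeq \vint_{B(x_j^i,2\delta_i)} u \, d\cH^N,
\]
truncated if necessary so that $0 \le u_i \le 1$. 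Because $u$ is continuous on the continua $E$ and $F$ where it is locally constant ($u \equiv 0$ near points of $E$ in the sense of averages, etc.), one arranges — by replacing $u$ first with $\min(\max(u,0),1)$ and using that $E, F$ are positively separated — that $u_i|_E = 0$ and $u_i|_F = 1$ for $i$ large; this may require a harmless cutoff forcing the coefficients $a_j^i$ to be exactly $0$ (resp. $1$) for nets points within $\delta_i$ of $E$ (resp. $F$).

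The second and central step is to produce the continuous local Lipschitz upper gradient $g_i$ and show $g_i \to g$ in $L^p$. For $x, y$ in a small ball $B(x,r)$ with $r \ll \delta_i$, only finitely many $\varphi_j^i$ are nonzero, and one estimates $|u_i(x) - u_i(y)|$ by comparing $a_j^i$ to a reference value and using $\sum_j \varphi_j^i \equiv 1$, $\sum_j (\varphi_j^i(x) - \varphi_j^i(y)) = 0$. The differences $|a_j^i - a_k^i|$ for neighboring net points are controlled by integral averages of the upper gradient $g$ of $u$ over a slightly larger ball, via a telescoping/chaining argument along a curve realizing the upper-gradient inequality — here is where $\int_\gamma g\,ds \ge |u(\gamma(0)) - u(\gamma(1))|$ for $p$-a.e.\ curve enters, combined with a Fubini/maximal-function argument to say that for a.e.\ pair of nearby points the relevant average of $g$ is finite and close to $g$. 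This produces a bound
\[
\LIP[u_i](B(x,r)) \lesssim \sup_{y \in B(x, C\delta_i)} \Bigl( \vint_{B(y, C\delta_i)} g^p \, d\cH^N \Bigr)^{1/p},
\]
so the natural choice is $g_i(x) \defeq C \, \bigl(\text{maximal average of } g^p \text{ at scale } C\delta_i \text{ around } x\bigr)^{1/p}$, mollified to be genuinely continuous (replace the sup over the ball by a Lipschitz-in-$x$ regularization, which is harmless since it only enlarges $g_i$). This $g_i$ is manifestly a continuous local Lipschitz upper gradient in the sense of Definition \ref{def:loc-lip-ug}. Lower semicontinuity of $g$ and $g \ge \epsilon > 0$ are used to guarantee these local averages are finite and to avoid degeneracy; convergence $g_i \to g$ in $L^p$ follows from the Lebesgue differentiation theorem for $\cH^N$ on $X$ (available as $\cH^N(X) < \infty$ makes $X$ a finite measure space and the averages are taken over metric balls — one invokes the Vitali covering theorem valid for any finite Borel measure on a metric space, or reduces to the doubling-at-fixed-scale case) together with a uniform $L^p$ domination by a fixed maximal-type operator applied to $g$ and the $p > 1$ boundedness, or alternatively by a direct equi-integrability argument exploiting lower semicontinuity.

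The main obstacle I anticipate is precisely the passage from the pointwise/curve-based upper gradient inequality for $u$ to a genuine \emph{pointwise, everywhere} Lipschitz estimate for the discretization $u_i$ with controlling function $g_i$ that still converges in $L^p$: naively, averages of $g^p$ over small balls need not converge to $g^p$ a.e.\ without a doubling hypothesis, and the maximal function need not be $L^p$-bounded in a general finite metric measure space. The resolution is to localize to a fixed scale (where $X$, being compact, is automatically doubling with some constant depending on that scale), to exploit that we only need the estimate as $\delta_i \to 0$, and to use the lower semicontinuity of $g$ which allows replacing $g$ from below by continuous functions and gives $\liminf$-type control of the averages — combined with the a priori bound $g \in L^p$ and $g \ge \epsilon$, a Fatou/Vitali–Carathéodory argument then upgrades a.e.\ lower bounds to $L^p$ convergence. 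A secondary technical point is ensuring the boundary values $u_i|_E = 0$, $u_i|_F = 1$ are attained exactly rather than approximately; this is handled by the separation $d(E,F) > 0$ and a deterministic cutoff of the coefficients near $E$ and $F$, which perturbs $g_i$ only on a set shrinking to $E \cup F$ (a $\cH^N$-null neighborhood in the limit), not affecting $L^p$ convergence.
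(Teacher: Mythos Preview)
Your proposal takes a fundamentally different route from the paper and, as written, has a genuine gap.

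The paper does \emph{not} mollify $u$. Instead it exploits the lower semicontinuity of $g$ to choose continuous $\tilde g_i \nearrow g$ with $\tilde g_i \ge \epsilon$; then $\tilde g_i \to g$ in $L^p$ is immediate by dominated convergence, with no constant and no measure-theoretic machinery. The approximant $\tilde u_i$ is built \emph{from $\tilde g_i$}, not from $u$: one sets
\[
\tilde u_i(x) \defeq \min\Bigl(1,\ \inf_{p_0,\dots,p_n} \sum_{k=0}^{n-1} \tilde g_i(p_k)\, d(p_k,p_{k+1})\Bigr),
\]
the infimum over discrete chains from $E$ to $x$ with step $\le 1/i$. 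This automatically makes $\tilde g_i$ a local Lipschitz upper gradient for $\tilde u_i$ (by the triangle-inequality estimate for one step), and $\tilde u_i|_E=0$ is free. The only work is showing $a_i \defeq \inf_F \tilde u_i \to 1$; this is done by contradiction, linearly interpolating near-optimal chains in an $\ell_\infty(\N)$ embedding of $X$, applying Arzel\`a--Ascoli to extract a limit curve $\gamma$ in $X$ from $E$ to $F$, and using lower semicontinuity of the length functional plus $\tilde g_i \nearrow g$ to conclude $\int_\gamma g\,ds < 1$, contradicting that $g$ is an upper gradient for $u$. Finally $u_i = \tilde u_i/a_i$, $g_i = \tilde g_i/a_i$.

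Your scheme, by contrast, hinges on two things the hypotheses do not provide. First, Lebesgue differentiation and $L^p$-boundedness of the maximal operator for $\cH^N$ on $X$: the space is merely compact with finite $\cH^N$-measure, with no doubling assumed, and ``doubling at each fixed scale'' does not give you differentiation of averages as the scale $\delta_i \to 0$. Second, and decisive for the application: your $g_i$ carries a structural constant $C>1$ coming from the partition-of-unity overlap and the Lipschitz bounds on $\varphi_j^i$, so even in the best case $g_i \to C g$ in $L^p$, not $g$. The theorem requires $g_i \to g$ exactly, because the point of this approximation is to obtain the \emph{sharp} Eilenberg constant $2v_{N-1}/v_N$ in Theorem~\ref{thm:continuous-u}; any multiplicative loss here propagates to the final inequality. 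A further difficulty: $E$ and $F$ may have positive $\cH^N$-measure, so your cutoff near $E\cup F$ is not supported on a set of vanishing measure and can genuinely perturb the $L^p$ limit. The paper's chain construction sidesteps all of this by never touching $u$ directly and by getting the $L^p$ convergence of $g_i$ for free from monotone approximation of the lsc function $g$.
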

\begin{proof}
Let $\tilde{g}_i \nearrow g$ be a sequence of continuous functions converging to $g$ pointwise so that $\tilde{g}_i \geq \epsilon$. Then $\tilde{g}_i \to_{L^p} g$ by dominated convergence. By iteratively redefining $\tilde{g}^*_j \defeq \max_{i=1, \dots, j}\{\tilde{g}_{i}\},$ and simplifying notation, we can insist $\tilde{g}_i \leq \tilde{g}_j$ for $i \leq j$.


The goal is to build the approximating function from its gradient. To that end,  define
\begin{equation}\label{eq:grad-integrate}
\cF_{i}(x) \defeq \inf_{p_0, \dots, p_n} \sum_{k=0}^{n-1}\tilde{g}_i(p_k) d(p_k,p_{k+1}),
\end{equation}
where the infimum is taken over all chains $\{p_0, \dots, p_n\}$ of points in $X$, such that $p_0\in E,p_n=x$ and $d(p_k,p_{k+1}) \leq \frac{1}{i}$ for $k=0,\dots, n-1$. Such chains are called $(x,i)$-{\it admissible}. We force the upperbound of $1$ by setting $\tilde{u}_i(x) := \min(\cF_i(x),1)$.\\

\noindent \textbf{Claim 1:} {\it $\tilde{u}_i$ has $\tilde{g}_i$ as a local Lipschitz upper gradient. }


Let $x,y\in X$. Since the map $z \to \min\{z,1\}$ is a contraction, we have $$|\tilde{u}_i(x)-\tilde{u}_i(y)|\leq |\cF_i(x)-\cF_i(y)|.$$ Assume first that $d(x,y) \leq \frac{1}{i}$. If $\cF_i(x)<\infty$,  then by definition (\ref{eq:grad-integrate}),
$$\cF_i(y) \leq \cF_i(x) + \tilde{g}_i(x)d(x,y).$$ Indeed,  any $(x,i)$-admissible chain $E\ni p_0,\dots,p_n=x$ can be extended to a $(y,i)$-admissible chain by adding $p_{n+1}=y$. Since $\tilde{g}_i$  is continuous, this implies that $\cF_i(y)$ is finite as well. By symmetry, either $\cF_i(x)$ and $\cF_i(y)$ are both finite, in which case \[|\tilde{u}_i(x)-\tilde{u}_i(y)|\leq |\cF_i(x)-\cF_i(y)| \leq \max(\tilde{g}_i(x),\tilde{g}_i(y)) d(x,y);\] Or $\cF_i(x)=\cF_i(y)=\infty$, and then $\tilde{u}_i(x)=\tilde{u}_i(y)=1$. 
In conclusion, we get $|\tilde{u}_i(x)-\tilde{u}_i(y)| \leq \max(\tilde{g}_i(x),\tilde{g}_i(y)) d(x,y)$, whenever $d(x,y) \leq \frac{1}{i}$. Choosing $r_x=\frac{1}{2i}$ for each $x\in X$ gives that $\tilde{g}_i$ is a local Lipschitz upper gradient for $\tilde{u}_i$.

Now assume that $d(x,y) \geq \frac{1}{i}$. Since $0\le \tilde{u}_i\le 1$, we have
$$|\tilde{u}_i(x)-\tilde{u}_i(y)|\leq 1\leq i d(x,y),$$ which shows that $\tilde{u}_i$ is Lipschitz. Therefore, we have shown that $\tilde{u}_i$ is Lipschitz with $\tilde{g}_i$ as local Lipschitz upper gradient. \\

\noindent \textbf{Claim 2:} Let  $a_i \defeq \inf\limits_{x \in F} \tilde{u}_i(x).$ We claim that $\lim_{i\to \infty} a_i = 1$. \\

By definition of $\tilde{u}_i$, we have $a_i\leq 1$ for each $i$. Suppose the claim does not hold. By passing to a subsequence  we may assume that $\lim_{i \to \infty}a_i < 1-\delta$ for some $\delta>0$. We will assume that  $X$ is embedded isometrically in $\ell_\infty(\N)$ and we identify it with its image. Such an embedding can be found using a Kuratowski embedding, since $X$ is separable, see for instance \cite[p. 99]{hei01}. For each $i\geq 1$, extend $\tilde{g}_i$ to be a continuous function on $\ell_\infty(\N)$ using the Tietze Extension theorem,  see  \cite{milman97}. 
Redefining $\tilde{g}^*_j := \max\{\max(\tilde{g}_{i},\epsilon) : i = 1, \dots, j\},$ and simplifying notation, we can insure that  $\epsilon \leq \tilde{g}_i \leq \tilde{g}_j$ for $i \leq j$, on the full ambient space $\ell_\infty(\N)$.

Since $\lim_{i\to\infty} a_i <1-\delta$, we can find discrete $(x,i)$-admissible chains $p_0^i, \dots p_{n(i)}^i$, for each $i \in \N$, so that $p_0^i \in E$,  $p_{n(i)}^i \in F$, and with $p_k^i \in X$ and $d(p^i_k,p^i_{k+1}) \leq \frac{1}{i}$ for $k=0,\dots,n(i)$, so that
\begin{equation}\label{eq:discchain}
\tilde{u}_i(p_{n(i)}^i)=\cF_{i}(p_{n(i)}^i) \leq \sum_{k=0}^{n(i)-1}\tilde{g}_i(p^i_k)  d(p^i_k,p^i_{k+1})\leq 1-\delta/2.
\end{equation}
By construction, we also have $\epsilon d(p^i_k,p^i_{k+1}) \leq \tilde{g}_i(p^i_k)  d(p^i_k,p^i_{k+1})$ for each $i,k$. Therefore, Equation \eqref{eq:discchain} gives
\begin{equation}\label{eq:li}
L_i\defeq \sum_{k=0}^{n(i)-1} d(p^i_k,p^i_{k+1})\leq \frac{1}{\epsilon}.
\end{equation}
Set $t^i_0\defeq 0$ and $t^i_k \defeq \sum_{l=0}^{k-1} d(p^i_l,p^i_{l+1})/L_i$ for $k=1,\dots, n(i)$. Define $\gamma_i:[0,1]\to \ell_\infty(\N)$ as $\gamma_i(t^i_k) \defeq p^i_k$ for $k=0,\dots, n(i)$ and extend $\gamma_i(t)$ to the interval $ [t^i_k,t^i_{k+1}]$ by linear interpolation in $\ell_\infty(\N)$. If $k<l$ then, by the triangle inequality, 
\[
d(\gamma_i(t_k^i),\gamma_i(t_l^i)) = d(p_k^i,p_l^i) \leq\sum_{s=k}^{l-1} d(p_s^i, p^i_{s+1}) = L_i(t^i_l-t^i_k).
\]
In other words, the curves $\gamma_i$ are $L_i$-Lipschitz for $i\in \N$, when restricted to the points $\{t^i_k:k=0,\dots, n(i)\}$. The same Lipschitz bound holds for $\gamma_i$ on $[0,1]$ since the curve is obtained by a linear extension.

First, fix $i \in \N$. We have $\gamma_i(t^i_k) \in X$ for each $i$, and for each $t\in [0,1]$, there is $k=0,\dots, n(i),$ so that 
$$|t-t^i_k|\leq \frac{d(p^i_k,p^i_{k+1})}{L_i}\le\frac{1}{i L_i}.$$ 
Combining this with the Lipschitz bound, we get $\gamma_i$ belongs to the tubular neighborhood $N_{1/i}(X)$ of $X$, of radius $1/i$. 

Now, fix $t\in [0,1]$. We will show that $A_t=\{\gamma_i(t):i\in \N\}$ is precompact in $\ell_\infty(\N)$. To that end, fix  $\eta>0$. Set $N:=\lfloor \frac{1}{\eta} \rfloor+1$. Then, for $i\ge N$, we have $1/i\le \eta$. So, $A_t \subset \{\gamma_1(t), \dots, \gamma_N(t)\}\cup N_\eta(X)$. Since $X$ is totally bounded, it can be covered by finitely many  $\eta$-balls. Therefore, $A_t$ can also be covered by finitely many $\eta$-balls. Since $\eta$ was arbitrary, we have shown that $A_t$ is totally bounded and hence precompact.

Therefore, by the Arzel\`a-Ascoli Lemma \ref{lem:arzelaascoli}, we can pass to a subsequence and assume that $\gamma_i \to \gamma$ converges uniformly to a Lipschitz curve. Further, since $\gamma_i \subset N_{1/i}(X)$, we get that $\gamma \subset X$. Also, $\gamma_i(0) \in E$ and $\gamma_i(1) \in F$, for all $i$'s, so $\gamma(0) \in E$ and $\gamma(1) \in F$.

By \cite[Proposition 4]{keith03}, for each $i\in \N$, we have 
\begin{equation}\label{eq:keithlsc}
\int_\gamma \tilde{g}_i ~ds \leq \liminf_{j\to\infty}\int_{\gamma_j} \tilde{g}_i ~ds.
\end{equation}
By compactness, each function $\tilde{g}_i$ is uniformly continuous on $X$. More precisely, for any $\epsilon>0$, there is a $\delta>0$ so that if $x\in X, y\in \ell_\infty(\N)$ with $d(x,y)\leq \delta$, then $|\tilde{g}_i(x)-\tilde{g}_i(y)|\leq \epsilon$. If $j>\frac{1}{\delta}$, then the curve $\gamma_j$ restricted to $[t^j_k, t^j_{k+1}]$ is a linear segment with end points $p^j_k, p^j_{k+1}$, and thus of length at most $1/j$, and hence at most $\delta$. Now, summing over $k$ together with a Riemann sum upper bound with the partition 
$\{t^j_k\}$, and sending $\epsilon \to 0$ gives
\begin{equation}\label{eq:upperbound1}
\lim_{j \to \infty}\int_{\gamma_j} \tilde{g}_i ~ds -\sum_{k=0}^{n(j)-1}  \tilde{g}_i(p^j_k) d(p^j_k,p^j_{k+1}) = 0.
\end{equation}

Combining Estimates \eqref{eq:upperbound1} and \eqref{eq:keithlsc} with the fact that $\tilde{g}_i$ is increasing in $i$, gives

\begin{eqnarray}\label{eq:upperbound-2}
\int_\gamma \tilde{g}_i ~ds &\stackrel{\eqref{eq:keithlsc}}{\leq}& \liminf_{j\to\infty}\int_{\gamma_j} \tilde{g}_i ~ds \stackrel{\eqref{eq:upperbound1}}{\leq} \liminf_{j\to\infty}\sum_{k=0}^{n(j)-1}  \tilde{g}_i(p^j_k) d(p^j_k,p^j_{k+1})~ds \\
&\stackrel{\tilde{g}_i\leq\tilde{g}_j}{\leq}& \liminf_{j\to\infty}\sum_{k=0}^{n(j)-1}  \tilde{g}_j(p^j_k) d(p^j_k,p^j_{k+1})~ds  \stackrel{\eqref{eq:discchain}}{\leq} 1-\delta. \nonumber
\end{eqnarray}

Sending $i \to \infty$ and using monotone convergence, since $\tilde{g}_i \nearrow g$, we get 
\[
\int_\gamma g ~ds \leq 1-\delta/2.
\]
This is a contradiction to the fact that $g$ is an upper gradient for $u$, because $u(\gamma(1))-u(\gamma(0))=1 \leq \int_\gamma g ~ds$. Indeed, we then must have $\lim_{i\to\infty} a_i = 1$.\\

\noindent\textbf{Conclusion:} We define $u_i = u/a_i$, and $g_i= \tilde{g}_i/a_i$, and the claim follows from the first and second claim.

\end{proof}

Finally, we can prove Theorem \ref{thm:continuous-u}.

\begin{proof}[Proof of Theorem \ref{thm:continuous-u}] Fix $u \in \spn$ as in the claim. Let $|\nabla u|_p$ be the minimal upper gradient of $u$. Let $n\geq 1$. By Vitali-Caratheodory, see Remark \ref{rmk:vitalicarath}, we can find a sequence of upper gradients $\tilde{g}_n\geq \max(|\nabla u|_p,n^{-1})$ of $u$ which are lower semicontinuous and converge in $L^p$ to $|\nabla u|_p$. Let $\tilde{u}_{m,n},\tilde{g}_{m,n}$ be the sequence constructed in Theorem \ref{thm:simplified} with $\tilde{g}_{m,n}$ a local Lipschitz upper gradient for $\tilde{u}_{m,n}$ and $\tilde{g}_{m,n}\to_{L^p} \tilde{g}_n$ as $m\to\infty$. We can choose an index $m(n)$ so that $\|\tilde{g}_{m(n),n}-g_n\|_{L^p}\leq \frac{1}{n}$

Next, let $u_n = \tilde{u}_{m(n),n}$ with a local Lipschitz upper gradient $g_n = \tilde{g}_{m(n),n}$. By construction $g_n\to_{L^p} |\nabla u|_p$ and $u_n|_E=0, u_n|_F=1$. Applying Lemma \ref{lem:local-lipshitz} to $u_n$ and $g_n$, and using the fact that $\partial\{u_n< t \}\subset u_n^{-1}(t)$ for all $t\in [0,1]$, we get
\[
 \int_{0}^1 \int_{\partial\{u_n< t \}} \rho ~d\cH^{N-1} ~dt \leq
\int_{-\infty}^\infty \int_{u_n^{-1}(t)} \rho ~d\cH^{N-1} ~dt \leq \frac{2v_{N-1}}{v_N}\int \rho g_n ~d\cH^{N}.
\]
for any Borel function $\rho \in L^q(X)$. Now, take the limit superior of both sides as $n$ tends to infinity. The right hand-side converges because $\rho\in L^q(X)$.
\end{proof}

\section{Proofs of main theorems}

\begin{proof}[Proof of Theorem \ref{thm:lowerbound}] 
Fix $p,q\in(1,\infty)$ with $p^{-1}+q^{-1}=1$.
If there are no admissible functions for  $\Sigma_H(E,F)$, then $\Mod_q(\Sigma_H(E,F))=\infty$ and the Inequality (\ref{eq:weakduality}) holds trivially. Assume therefore that there is a non-negative Borel function $\rho \in L^q(X)$ which is admissible for $\Sigma_H(E,F)$. As we already noted, $\Mod_p(\Ga(E,F))<\infty.$ So, let $g$ be a non-negative Borel function that is  admissible for $\Mod_p(\Gamma(E,F))$. By Lemma \ref{lem:modcap}, there is a function $u\in N^{1,p}(X)$, with $u|_E=0, u|_F=1$, so that $|\nabla u|_p \leq g$ almost everywhere.
By Theorem \ref{thm:continuous-u} and admissibility of $\rho$ for $\Sigma_H(E,F)$, we obtain a sequence $u_i \in N^{1,p}(X)$ such that
$$1 \leq \limsup_{i 
\to \infty} \int_{0}^1 \int_{\partial\{u_i< t \}} \rho d\cH^{N-1} ~dt \leq \frac{2v_{N-1}}{v_N}\int \rho |\nabla u|_p ~d\cH^N.$$
By H\"older's inequality,
\[
\frac{v_N}{2v_{N-1}} \leq \left(\int \rho^q  ~d\cH^N \right)^{1/q} \left(\int |\nabla u|_p^p  ~d\cH^N \right)^{1/p}\le \left(\int \rho^q  ~d\cH^N \right)^{1/q} \left(\int g^p  ~d\cH^N \right)^{1/p}.
\]
Taking an infimum over all admissible $\rho,g$, yields the inequality. In the case $\Mod_p(\Gamma(E,F))=0$, taking an infimum over $g$ admissible yields a contradiction to $\rho\in L^q(X)$, and thus there are no admissible functions $\rho \in L^q(X)$ for $\Sigma_H(E,F)$. Then, $\Mod_q(\Sigma_H(E,F))=\infty$ as claimed.


\end{proof}

\begin{proof}[Proof of Corollary \ref{cor:plane}] Assume $\sigma\in\Sigma_H(A,C;Q)$. Then, there is a relatively open set $U \subset Q$ with $A \subset U$ and $C$ contained in the relative interior of the complement of $U$, such that $\sigma=\mathcal{H}^1|_{\partial_Q U}$ is finite. By Lemma \ref{lem:contained}, there is a simple rectifiable curve $\gamma\in \Gamma_2(Q)$ contained in $\partial_Q U$ with $\cH^1|_\gamma \leq \cH^1|_{\partial_Q U}$. Hence by Lemma \ref{lem:modulus-contained}, we have $\Mod_q(\Sigma_H(A,C;Q)) \leq \Mod_q(\Gamma_2(Q))$. Here, it is important that $\gamma$ be simple, because when using Definition \eqref{eq:modulusdef}, we need $\int_\gamma \rho ~ds = \int_\gamma \rho ~d\cH^1|_\gamma$ so that the modulus for \emph{curves} $\gamma \in \Gamma_2(Q)$ coincides with the modulus of \emph{measures} $\cH^1|_\gamma$, for $\gamma\in \Gamma_2(Q)$. This is only true for simple curves. The claim follows from Theorem \ref{thm:lowerbound} applied to the metric space $Q$ with its restricted metric and Hausdorff measure.
\end{proof}

\noindent \textbf{Questions:} We leave open a few questions. First,  regarding the case of $p=1$ in Theorem \ref{thm:lowerbound}, we plan to return to this question in later work. It seems, that this case does not lead to significant issues, but that the case $q=\infty$  for the dual modulus needs to be interpreted properly. A second question is when in Theorem \ref{thm:continuous-u} can we replace the sequence $u_i$ by $u$ and obtain a co-area inequality. The issue, formally, is whether the Hausdorff $(N-1)$-measure of $\partial\{u_i<t\}$ converges, for almost every $t$, to the Hausdorff $(N-1)$-measure of the set $\partial\{u<t\}$. However, these measures may fail to be lower semi-continuous. It seems a further assumption may be needed to guarantee sufficient continuity.


\bibliography{pmodulus}{}
\bibliographystyle{plain}

\end{document}